\numberwithin{equation}{section}
\def\serieslogo@{}
\def\@setcopyright{}
\newcommand\ds{\displaystyle}
\def\p{\partial}
\def\dg{\delta^\gamma}
 \def\R{{\mathbb R}}
 \def\dis{\displaystyle}
 \def\F{\mathcal{F}}
  \def\H{{\oc{\mathcal{H}}}}
  \def\Ht{{\mathcal{H}}}
\def\oc{\overset\circ}
\def\Hg{{\oc H{}^1_\mu}}
\def\Hgo{{\oc H{}^1_{\mu_0}}}
\def\nm{\nabla_m}
\def\we{\ln \frac e\rho}
\def\wet{\ln^2 \frac e\rho}
\def\ep{\varepsilon}
\newtheorem{thm}{Theorem}
\newtheorem{lem}[thm]{Lemma}
\newtheorem{prop}[thm]{Proposition}
\newtheorem{defn}[thm]{Definition}
\newtheorem{rmk}[thm]{Remark}
\title[]{The Cauchy-Dirichlet problem for the FENE Dumbbell model of polymeric
fluids}
\author{
Hailiang Liu and Jaemin Shin
}
\address{Iowa State University, Mathematics Department, Ames, IA 50011} \email{hliu@iastate.edu}
\address{Institute for Mathematics and its applications,
University of Minnesota, Minneapolis, MN 55455,
} \email{shin@ima.umn.edu}
\keywords{Fokker Planck equations, Navier-Stokes equations, FENE model, boundary conditions, well-posedness.}
\subjclass{35Q30, 82C31, 76A05.}
\date{September 29, 2010}
\begin{document}

\begin{abstract}
The FENE dumbbell model consists of the incompressible Navier-Stokes
equation and the Fokker-Planck equation for the polymer distribution.
In such a model, the polymer elongation cannot exceed
a limit $\sqrt{b}$, yielding all interesting features near the
boundary. In this paper we establish the local well-posedness for
the FENE dumbbell model under a class of Dirichlet-type boundary
conditions dictated by the parameter $b$. As a result, for each $b>0$
we identify a sharp boundary requirement for the underlying density distribution,
while the sharpness follows from the existence result for each
specification of the boundary behavior.  
It is shown that the probability density governed by the Fokker-Planck equation  approaches  zero near boundary,
necessarily faster than the distance function $d$ for $b>2$, faster than $d|ln d|$ for $b=2$,  and as fast as $d^{b/2}$ for $0<b<2$.
Moreover, the sharp boundary requirement for $b\geq 2$ is also sufficient for the distribution to remain
a probability density.

\end{abstract}
\maketitle

\bigskip


\section{Introduction}
Let $N\geq 2$ be an integer.  We consider a dimer -- an idealized
polymer chain -- as an elastic dumbbell consisting of two beads
joined by a spring that can be modeled by an elongation  vector
$m\in \R^N$ (see e.g \cite{BCAH87}), with $\Psi$ being the elastic
spring potential defined by
\begin{equation}\label{potential}
\Psi(m)= -\frac{Hb}{2}\log \left(1-\frac{|m|^2}{b}\right), \quad
m\in B.
\end{equation}
Here $B:=B(0, \sqrt{b})$ is a ball in $\R^N$ with radius $\sqrt{b}$
denoting the maximum dumbbell extension. In the limiting case, this
reduces to the Hookean model with $\Psi(m)=H|m|^2/2$. A general
bead-spring chain model may contain more than two beads coupled with
elastic springs to represent a polymer chain.

Polymers as such when put into an incompressible, viscous, isothermal Newtonian solvent are modeled by a
system coupling the incompressible Navier-Stokes equation for the
macroscopic velocity field $v(t,x)$ with the Fokker-Planck equation
for the probability distribution function $f(t,x,m):$
\begin{eqnarray}
\p_t v + (v\cdot \nabla) v + \nabla p &=& \nabla \cdot \tau + \nu_k \Delta v, \label{n1}\\
\nabla\cdot v&=&0,\label{n2}\\
\p_tf+ (v\cdot \nabla)f+ \nabla_m\cdot(\nabla v m f)&=&\frac2\zeta
\nabla_m \cdot(\nabla_m \Psi(m)f)+\frac{2k_BT_a}{\zeta}\Delta_m
f,\label{n3}
\end{eqnarray}
where $x\in\R^N$ is the macroscopic Eulerian coordinate and $m\in
B\subset \R^N$ is the microscopic molecular configuration variable.
The model describes diluted solutions of polymeric liquids with
noninteracting polymer chains (dimers). Note that the Fokker-Planck equation can be
conveniently augmented to incorporate other effects such as inertial
forces (see \cite{DL09}).

In  Navier-Stokes equation \eqref{n1}, $p$ is hydrostatic
pressure, $\nu_k$ is the kinematic viscosity coefficient, and $\tau$
is a tensor representing the polymer contribution to stress,
\begin{equation*}
\tau=\lambda_p \int m\otimes\nabla_m \Psi(m)f dm,
\end{equation*}
where  $\lambda_p$ is the polymer density constant.  In the
Fokker-Planck equation \eqref{n3}, $\zeta$ is the friction
coefficient of the dumbbell beads, $T_a$ is the absolute
temperature, and $k_B$ is the Boltzmann constant. We refer
to \cite{BCAH87, DE:1986, Oe:1996} for a comprehensive survey of the physical background, and
\cite{OP02} for the computational aspect.

Since $B$ is a bounded domain, one has to add an appropriate
boundary condition for $f$ on the boundary $\partial B$. However,
the singularity of the Fokker-Planck equation near $\partial B$
makes the boundary issue rather subtle, and presents various
challenges. To address the boundary issue, several transformations
relating to the equilibrium solution have been introduced in
literature ( see, e.g. \cite{DLY05, KS09, LiLi08, LiSh08}). It was
shown in  \cite{LiLi08} that $b=2$ is a threshold in the sense that
for $b\geq 2$ any preassigned boundary value of the ratio of the
distribution and the equilibrium will become redundant, and for
$b<2$ that value has to be a priori given.

Our main quest in this paper is that what is the
least boundary requirement for $f$ so that both existence and uniqueness
of solutions to the FENE model can be established, also the solution remains a probability density.

We addressed this issue in \cite{LiSh08} for the microscopic FENE
model alone and when  $b>2$.  In this article we consider the
well-posedness of the coupled system (\ref{n1})-(\ref{n3}).  A
general discussion of this problem and background references are
given in the introduction to \cite{LiSh08}.  Here we have two
objectives:
\begin{itemize}
\item[(1)] to identify sharp boundary conditions on $\partial B$ for all  $b>0$.
\item[ (2)] to prove well-posedness  for the coupled FENE dumbbell model under the identified boundary condition.
\end{itemize}
The setting for our problem is the coupled system subject to the
initial data
\begin{eqnarray}
v(0,x)&=& v_0(x)\\
f(0,x,m)&=& f_0(x,m),
\end{eqnarray}
with the following boundary requirement
\begin{equation}\label{ft}
f(t,x,m)\nu^{-1}|_{\p B}= q(t,x,m)|_{\p B}.
\end{equation}
Here $\nu$ depends on $b$ through the distance function, and $q$ is a given
function measuring the relative ratio of $f/\nu$ near boundary. Our
goal is to investigate solvability of the above system with the
Cauchy-Dirichlet data. Note that our boundary condition is
more or less a boundary behavior requirement for $f$, instead of the
Dirichlet data in the traditional  sense.

Instead of using the distance function $d=\sqrt{b}-|m|$ we shall use
a regularized distance function $\rho=b-|m|^2$ when describing the
solution  behavior near boundary. Our main observation is  the form
of $\nu$
\begin{equation}\label{nu}
\nu=\left\{
      \begin{array}{ll}
        \rho^{b/2}, & 0<b<2, \\
        \rho \we, &  b=2, \\
        \rho, &  b>2.
      \end{array}
    \right.
\end{equation}
With some regularity requirement on $q$ as well as on initial data
we prove local well-posedness for the Cauchy-Direchlet problem in a
weighted Sobolev space for each given $q$. Our results indicate
that simply putting $f=0$ on boundary does not guarantee uniqueness
of the solution.

For the Dirichlet-type boundary condition (\ref{ft}) considered in this paper, our strategy is to study
the transformed
problem via
$$
w=\frac{f}{\nu} -q
$$
with $\nu$ defined in (\ref{nu}) so as to extract useful info for $f$.  Inspired from  \cite{Ma08},  for the coupled FENE system we
use weak norm in $m$ and strong norm in $x$, this enables us to prove wellposedness for all
cases of $b>0$ and any given smooth $q$.

For the case $b\geq 2$  of physical interest,  we prove that $f$
remains a density distribution if and only if  $q|_{\p B}=0$. We
thus identify a sharp boundary requirement for each $b>0$ for the
underlying density distribution, while the sharpness is a
consequence of the existence result for each $q\not=0$.  In
particular, our result asserts that near boundary the probability
density governed by the Fokker-Planck equation  approaches
zero, necessarily faster than the distance function $d$ for $b>2$,
faster than $d|ln
d|$ for $b=2$, and as fast as $d^{b/2}$ for $0<b<2$.  
But within our current framework we have not been able to identify  a non-trivial $q$ for $0<b<2$ such that the corresponding
solution is a density distribution.

We remark that the sharp boundary condition  presented in this work
provides a threshold on the boundary requirement:  subject to this
condition or any stronger ones incorporated through a weighted
function space \cite{ZhZh06} or just zero flux \cite{Ma08}, the
Fokker-Planck dynamics will select the physically relevant solution,
which is a probability density, any weaker boundary requirement can
lead to many solutions, each depending on the ratio of  $f/\nu$ near
boundary.

This article is organized as follows.  In Section 2, we state our
main results and mains ideas of the proofs.  In Section 3, we study
the Fokker-Planck operator and well-posedness of the initial
boundary value problem for the Fokker-Planck equation alone. This
improves upon our previous work in \cite{LiSh08}. The main result is
summarized in Theorem \ref{prop1}. The Fokker-Planck problem
involving spatial variable $x$ is investigated in Section 4.
Well-posedness of the coupled system is proved in Section 5. In
Section 6, we sketch the proof of well-posedness for the coupled system with $b\geq 6$ in a
different function space than what we used in Section 5.   Some concluding remarks are drawn in Section 7.\\

We conclude this section by some bibliographical remarks.\\

Existence results for the FENE model are usually limited to small-time existence
and uniqueness of strong solutions.  We refer to \cite{Rm91} for the local
existence on some related coupled systems, \cite{JLL04} for the FENE model (in the setting where the
Fokker-Planck equation is formulated by a stochastic differential
equation) with $b > 6$, \cite{ETiZh04} for a polynomial force. More related to this paper  are  the work by Zhang and Zhang \cite{ZhZh06} for the FENE model when $b >76$,  and
Masmoudi \cite{Ma08}  for $b>0$.  Global existence results are
usually limited to solutions near equilibrium,
see  \cite{LLZ07, LiZh08}, or to some 2D simplified models \cite{CFTZ07,
CM08,LiZhZh08, MZZ08}. For results concerning the existence of weak solutions to the coupled FENE system we refer to
\cite{BaSchSu05,BaSu07,BaSu08,BaSu10, LiMa07, Ma10, Sch09, ZZZ08}.

Boundary behavior of the polymer distribution governed by the FENE
model is also essential in several other aspects, including the
study of large time behavior, see \cite{ACM09,  HZ09, JLLO06,
Sch09}; and development of numerical methods, see, e.g., \cite{CL04,
CL04+, DLY05, KS09, LY10, SY10}. We also refer to \cite{JoLe03} for
references on numerical aspects of polymeric fluid models.

There are also some interesting works on non-Newtonian fluid models
derived through a closure of the linear Fokker-Planck equation (see,
e.g., \cite{DLP02, DLY05}). We can refer to the pioneering work
\cite{GS90, GS90+}, and more recently to \cite{CM01, LLZ05, LLZ08,
LM2000, LeZ05}.

However, none of these works is concerned with the sharpness of boundary conditions
in terms of the elongation parameter.

\section{Main results}\label{sec2}
After a suitable scaling and choice of parameters we arrive at  the
following Cauchy-Dirichlet probelm for the coupled system
\begin{subequations}\label{main1}
\begin{eqnarray}
\p_t v + (v\cdot \nabla) v + \nabla p &=& \nabla \cdot \tau + \Delta v, \quad x\in \R^N, \quad t>0,\\
\nabla\cdot v&=&0,\\
\p_tf+ (v\cdot \nabla)f+\nabla_m\cdot(\nabla v m f)&=&\frac12
\nabla_m
\cdot\left(\frac{bm}{\rho} f \right)+\frac12\Delta_m f, \quad m\in B, \\
 \tau&=& \int m\otimes\frac{bm}{\rho}f dm, \\
v(0,x)&=&v_0(x),\\
f(0,x,m)&=&f_0(x,m),  \\
f(t,x,m)\nu^{-1}|_{\p B}&=& q(t,x,m)|_{\p B}.\label{main_boundary}
\end{eqnarray}
\end{subequations}

To present our main results we first fix notations to be used
throughout  this article.  We fix an exponent $s$, which is an
integer in the range  $s >N/2+1$.  We use $C$ to denote various
constants depending on $s$, $b$  and on some other quantities which
we will indicate in the sequel.  A $b$-dependent weight function is
defined as
\begin{eqnarray}  \label{mu}
\mu=\left\{
      \begin{array}{ll}
        \rho^{b/2}, & 0<b<2, \\
        \rho \wet, &  b=2, \\
        \rho^{2-b/2}, & b>2.
      \end{array}
    \right.
      \end{eqnarray}
   For $b\geq 6$, we also use
    \begin{equation}
        \mu_0=\rho^\theta, ~-1<\theta<1,~ b\geq 6.
\end{equation}
Other notations are listed as below as well.
\begin{itemize}
  \item $\dis L^2_\mu =\left\{\phi:\int_B  \phi^2 \mu dm<\infty
\right\}$
  \item $\dis H^1_{\mu}=\{\phi: \phi, \p_{m_j}\phi \in L^2_\mu, j=1\cdots N.\}$
  \item $\overset\circ H{}^1_\mu$ denotes the completion of $C^\infty_c$ with $H^1_\mu$ norm.
  \item $H^*$ is a dual space of $H$
  \item $H^s_x$ is the usual Sobolev space with respect to $x$
   \item \begin{align*}
   |v|_s^2 &= \sum_{|\alpha|\leq s} \int_{\R^N}  |\p^\alpha v|^2 dx,\\
   |w|^2_{0,s} &=  \sum_{|\alpha|\leq s} \int_{\R^N}  \int_B |\p^\alpha w|^2 \mu dm dx,\\
|w|^2_{1,s} &= |w|^2_{0,s} + |\nabla_m w|^2_{0,s},\\
\|w\|^2_{1, 1, s}& =\sup_t (|w|^2_{1, s}+|\partial_t w|_{1, s}^2),\\
||q||& =  \|q\|_{H^1_\mu}+ \|\p_t q\|_{H^1_\mu}.
\end{align*}
 \item $H^s_x L^2_\mu =\{\phi: |\phi|_{0,s} < \infty\}, \quad H^s_xH^1_\mu =\{\phi: |\phi|_{1,s} < \infty\}.$
 \item $L^2_t H =L^2((0,T); H), \quad C_tH=C([0,T];H)$ for $0<t<T.$
 \item $\dis \mathcal{H} =\{\phi: ||\phi||_{L^2_t H^1_\mu} +
||\phi_t||_{L^2_t (H^1_\mu)^*}< \infty\}$\\
 $\dis \H =\{\phi(t,\cdot)\in \Hg: ||\phi||_{L^2_tH^1_\mu} +
||\phi_t||_{L^2_t(\Hg)^*}< \infty\}.$
\item
$$
\dis \mathbf{X}_\mu= [C_t H^s_x \cap L^2_t H_x^{s+1}]\times
[C_tH^s_xL^2_\mu \cap L^2_tH^s_xH^1_\mu].
$$
\item  For a generic constant independent of $T$ and $a\in L^2_t$ we denote
\begin{equation}\label{F}
F(a)=C \left(T+\int_0^T |a(t)|^2 dt \right).
 \end{equation}
 Due to such a constant, any two instances of $F$ should be presumed to be with different constants.
\end{itemize}

We now state our main theorem as follows:

\begin{thm}\label{thm1}
Let $b>0$ and $s$ be an integer such that $s>N/2+1$. Suppose that
$v_0 \in H^s_x$, $f_0\nu^{-1} \in H^s_xL^2_{\mu}$, and $q \in
C^1_tH^{s+1}_x H^1_\mu$. Then, for some $T>0$ there exists a unique
solution $(v,f)$ to the coupled problem \eqref{main1} such that
$$
(v,f\nu^{-1})\in\mathbf{X}_\mu.
$$
\end{thm}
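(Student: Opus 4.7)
The plan is to homogenize the boundary condition by the substitution $w := f\nu^{-1} - q$ and to solve the coupled system by a Picard iteration that decouples the Navier--Stokes block from a linearized Fokker--Planck block. Since $q|_{\partial B}$ carries all boundary information, the new unknown satisfies $w|_{\partial B}=0$; inserting $f=\nu(w+q)$ into \eqref{main1} and dividing by $\nu$ produces a linear-in-$w$ equation whose principal $m$-part is
$$
\mathcal{L}w \;:=\; \frac{1}{2\nu}\Delta_m(\nu w) \;-\; \frac{1}{2\nu}\nabla_m\!\cdot\!\Bigl(\tfrac{bm}{\rho}\nu w\Bigr),
$$
driven by source terms involving $v$, $\nabla v$, $q$ and $\partial_t q$. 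The case split in $\nu$ in \eqref{nu} is designed precisely so that $-\langle \mathcal{L}w,w\rangle_{L^2_\mu}$ is coercive on $\overset{\circ}{H}{}^1_\mu$, which is the content of the Fokker--Planck analysis summarized in Theorem~\ref{prop1}.

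Next, I would set up the iteration: given $(v^n,w^n)$, let $v^{n+1}$ solve the incompressible Navier--Stokes system with initial datum $v_0$ and forcing $\nabla\cdot\tau^n$, where
$$
\tau^n \;=\; \int_B m\otimes \tfrac{bm}{\rho}\nu(w^n+q)\,dm,
$$
and let $w^{n+1}$ solve the linear Fokker--Planck equation with transport velocity $v^{n+1}$, zero boundary data, and initial datum $f_0\nu^{-1}-q(0,\cdot)$. The Navier--Stokes step is classical in $C_tH^s_x\cap L^2_tH^{s+1}_x$ once $\nabla\cdot\tau^n\in L^2_tH^s_x$ is verified, while the Fokker--Planck step is a spatially parametrized version of the abstract theory. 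A key conceptual point is that the norm on $\mathbf{X}_\mu$ carries no weight in $x$: since $x$-derivatives commute with the full $m$-operator, $\partial_x^\alpha w^{n+1}$ with $|\alpha|\le s$ solves a scalar weighted FP problem in $m$ that fits directly into the $\overset{\circ}{H}{}^1_\mu$ framework.

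I would then close uniform estimates of the form
$$
\frac{d}{dt}\bigl(|v^{n+1}|_s^2 + |w^{n+1}|_{0,s}^2\bigr) + |\nabla v^{n+1}|_s^2 + |\nabla_m w^{n+1}|_{0,s}^2 \;\le\; C\bigl(1+|v^n|_s^2+|w^n|_{1,s}^2+\|q\|\bigr)\bigl(1+|v^{n+1}|_s^2 + |w^{n+1}|_{0,s}^2\bigr),
$$
using Moser/commutator estimates valid for $s>N/2+1$ and the hypothesis $q\in C^1_tH^{s+1}_xH^1_\mu$ to absorb $q$-dependent forcing. Gronwall produces a time $T>0$ on which the iterates remain in a fixed ball of $\mathbf{X}_\mu$, and contraction of the differences $(v^{n+1}-v^n,w^{n+1}-w^n)$ in the weaker norm $C_tH^{s-1}_x\times C_tH^{s-1}_xL^2_\mu$ follows by the same energy method applied to the difference equations; uniqueness and continuity in time are then standard.

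The main obstacle, and the reason behind the three-case definitions of $\nu$ in \eqref{nu} and $\mu$ in \eqref{mu}, is the weighted coercivity of $\mathcal{L}$ together with the control of the polymer stress near $\partial B$. The drift $bm/\rho$ blows up at the boundary while $\mu$ degenerates there, so bounding $\tau$ and the commutators produced by $\mathcal{L}$ requires Hardy-type inequalities of the schematic form
$$
\int_B |w+q|^2\Bigl(\tfrac{bm}{\rho}\Bigr)^2 \nu^2\,dm \;\lesssim\; \int_B\bigl(|w+q|^2 + |\nabla_m(w+q)|^2\bigr)\mu\,dm,
$$
and these inequalities are precisely what forces the case-by-case choice of $(\nu,\mu)$. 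Once this weighted calculus and the coercivity of $\mathcal{L}$ on $\overset{\circ}{H}{}^1_\mu$ are in place, the Picard scheme above closes uniformly in $b>0$ and the theorem follows.
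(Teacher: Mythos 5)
Your outline mirrors the paper's strategy quite closely: the transformation $w=f\nu^{-1}-q$, the decoupled NSE/FPE iteration, coercivity of the transformed Fokker--Planck operator on $\Hg$, and contraction in a weaker norm all match the paper's Section 5. However, there is a substantive gap in the stress estimate, which is the delicate point that the paper's Lemma~\ref{lem2} is designed to address.

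Your schematic Hardy-type inequality
$$
\int_B |w+q|^2\Bigl(\tfrac{bm}{\rho}\Bigr)^2\nu^2\,dm \;\lesssim\; \int_B\bigl(|w+q|^2+|\nabla_m(w+q)|^2\bigr)\mu\,dm
$$
controls $|\tau|^2$ by the \emph{full} $H^1_\mu$ norm, with coefficient~$1$ in front of the $m$-gradient. But the $L^2_t$-integral of $|\nabla_m w|^2_{0,s}$ is precisely the FPE dissipation term whose size \emph{does not} shrink as $T\to 0$ (it only shrinks under smallness of the data). If that term enters the Navier--Stokes estimate with a coefficient of order one, it feeds back into $\int_0^T|v|^2_{s+1}\,dt$, which then sits in the exponent $e^{F(|v|_{s+1})}$ controlling the next FPE bound. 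The resulting bootstrap inequality has the structure $A\le e^{C(|v_0|_s^2+CA)}\bigl(\|w_0\|^2+\|q\|^2\bigr)$, which admits a finite fixed point only when the initial data is small; for large $\|w_0\|_{0,s}$ or $\|q\|$ no such $A$ exists, so the fixed-point set $\mathbf{M}$ cannot be constructed. The paper's Lemma~\ref{lem2} fixes exactly this: for $\phi\in\Hg$ (zero trace) it gives $\bigl|\int\phi\nu\rho^{-1}\,dm\bigr|^2\le C_\ep\|\phi\|^2_{L^2_\mu}+\ep\|\nabla_m\phi\|^2_{L^2_\mu}$, with the gradient appearing only with a freely small $\ep$; for $b>2$ this is immediate since $\int_B\mu^{-1}\,dm<\infty$ (no gradient needed at all), while for $0<b\le 2$ it requires the non-obvious weak-compactness argument in the proof of Lemma~\ref{lem2}, relying on the compact embedding $\Hg\hookrightarrow L^2_{\mu^*}$ and the fact that $\sqrt{\mu/\mu^*}\in L^2_{\mu^*}$. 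This small-$\ep$ refinement is what lets the paper absorb the dissipation term and close the estimates without any smallness of the data. As written, your argument would only yield small-data local well-posedness.

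Two smaller points: your proposed contraction norm $C_tH^{s-1}_xL^2_\mu$ carries no $L^2_t$-gradient-in-$m$ term, but such a term is needed because the difference of the stresses $\tau_2-\tau_1$ is again controlled (via the same $\ep$-refined Lemma~\ref{lem2}) through $\int_0^T|\nabla_m(\varpi_2-\varpi_1)|^2_{0,0}\,dt$; the paper's contraction norm includes it. Also, the claim that $x$-derivatives commute with the full $m$-operator is not literally correct because of the $\nabla_m\cdot(\nabla v\,m\,\cdot)$ term and the transport $v\cdot\nabla_x$; the paper handles these via Lagrangian coordinates (for existence) plus the commutator estimates (for a priori bounds), and your sketch needs the latter made explicit for the difference quotients $\delta^\gamma$ as in the paper's proof of Theorem~\ref{thm3}.
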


It is known from (\cite{Ku85}) that $\Hg = H^1_\mu$ for $b\geq6$ with $\mu$ defined in \eqref{mu}. Thus, the boundary condition
\eqref{main_boundary} is nothing but the zero dirichlet boundary
condition under the assumption on $q$ in Theorem \ref{thm1}. For
non-trivial $q$ when $b\geq 6$, we show the well-posedness in a
different weighted Sobolev space. The result summarized as below.

\begin{thm}\label{thm4}
Let $b\geq 6$ and $s$ be an integer such that $s>N/2+1$. Suppose
that $v_0 \in H^s_x$, $f_0\nu^{-1} \in H^s_xL^2_{\mu_0}$, and $q \in
C^1_tH^{s+1}_x H^1_{\mu_0}$ with $\mu_0$ defined in (\ref{mu}). Then, for some $T>0$ there exists a
unique solution $(v,f)$ to the coupled problem \eqref{main1} such
that
$$
(v,f\nu^{-1})\in \mathbf{X}_{\mu_0}.
$$
\end{thm}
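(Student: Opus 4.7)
The plan is to mirror the proof of Theorem \ref{thm1}, systematically replacing the weight $\mu$ with $\mu_0=\rho^\theta$ ($-1<\theta<1$) throughout the argument. As in the earlier proof, I would introduce the substitution $w = f\nu^{-1} - q$, so that $w$ satisfies a transformed Fokker--Planck equation with homogeneous Dirichlet boundary data on $\partial B$, together with lower-order source terms involving $v$, $q$, and $\partial_t q$. The coupled system is then recast as a fixed-point problem: given $v^n$, solve the linearized Fokker--Planck equation for $w^{n+1}$, compute the stress $\tau^{n+1}$, and then solve the Navier--Stokes system for $v^{n+1}$. The contraction and passage to the limit proceed exactly as in Section 5, provided the requisite weighted estimates are in hand.

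The main analytic step is to re-derive the Fokker--Planck well-posedness of Theorem \ref{prop1} with $\mu$ replaced by $\mu_0$. Concretely, I would establish the coercivity-type estimate
\begin{equation*}
\int_B \nabla_m\!\cdot\!\bigl(\tfrac{bm}{\rho}\phi\bigr)\,\phi\,\mu_0\,dm + \tfrac12\int_B |\nabla_m \phi|^2 \mu_0\,dm \;\geq\; c_0 \|\phi\|_{H^1_{\mu_0}}^2 - C\|\phi\|_{L^2_{\mu_0}}^2
\end{equation*}
for $\phi\in \overset\circ H{}^1_{\mu_0}$, together with a Hardy-type inequality controlling $\rho^{-1}\phi$ by $\nabla_m\phi$ in the $\mu_0$-weighted norm. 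The range $b\geq 6$ enters because the drift coefficient $b/\rho$ must be dominated by the weight in the coercivity inequality, while the range $-1<\theta<1$ is precisely the regime in which (i) boundary traces in $H^1_{\mu_0}$ are meaningful (so that non-trivial $q|_{\partial B}$ makes sense) and (ii) the weighted space admits the usual compactness and density properties; here one invokes Kufner's characterization cited from \cite{Ku85}.

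With these weighted estimates at hand, the iteration scheme is closed by controlling the stress $\tau = \int m\otimes \tfrac{bm}{\rho}(\nu(w+q))\,dm$ in $H^s_x$ by $|w|_{0,s;\mu_0}+|q|_{0,s;\mu_0}$. This reduces to checking the $m$-integrability of $|m|^2(b/\rho)\,\nu\,\mu_0^{-1}$ over $B$, which for $b\geq 6$ (so $\nu=\rho$) amounts to $\int_B \rho^{-\theta}\,dm<\infty$, valid since $\theta<1$. Higher-order $x$-derivatives are treated by commuting $\partial^\alpha$ through the equation as in Sections 4--5, since the FENE operator in $m$ does not interact with $x$-derivatives and the transport/drift terms in $x$ satisfy standard Moser/calculus inequalities for $s>N/2+1$.

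The hardest step will be the transition from the weight $\mu$ to $\mu_0$ in the coercivity estimate: one must verify that the boundary integrals arising from integration by parts either vanish (using $w\in \overset\circ H{}^1_{\mu_0}$) or can be absorbed using the prescribed regularity $q\in C^1_tH^{s+1}_xH^1_{\mu_0}$, and that the resulting constants remain uniform in the iteration. Once the weighted Fokker--Planck theory is re-established for $\mu_0$, the remaining steps — uniform bounds, contraction, and uniqueness — are formally identical to the arguments used for Theorem \ref{thm1} and only require book-keeping to track the weight change throughout.
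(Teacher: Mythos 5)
Your high-level plan — transform via $w=f\nu^{-1}-q$, mirror the fixed-point scheme of Section~5 with $\mu$ replaced by $\mu_0=\rho^\theta$, and re-derive the weighted Fokker--Planck estimates — is indeed what the paper does. But you have not identified the one genuinely new technical obstruction, and the place where you gesture at $b\geq 6$ does not capture the actual mechanism.

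The point you miss is structural. The weight $\mu$ in (\ref{mu}) is chosen so that the transformed operator $L=\mu\nu^{-1}\mathcal{L}[\nu\,\cdot\,]$ has the clean divergence form (\ref{lw}): there is \emph{no} free first-order drift term. When you instead use $\mu_0=\rho^\theta$, the transformation produces
\begin{equation*}
L_0[w]=-\tfrac12\nabla\cdot(\nabla w\,\mu_0)
+\Bigl(2-\tfrac{b}{2}-\theta\Bigr)\,m\cdot\nabla w\,\rho^{\theta-1}
+\nabla\cdot(\kappa m w\,\mu_0)-K_0 w,
\end{equation*}
and the middle term is a genuinely new first-order term with a singular coefficient $\rho^{\theta-1}$ that is \emph{not} absorbed by the diffusion or controlled by the Hardy/embedding inequality alone. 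This is exactly what the paper's Lemma~\ref{lemma5-1} is for: integrating by parts (using the zero trace of $w\in\Hgo$) shows
\begin{equation*}
\Bigl(2-\tfrac{b}{2}-\theta\Bigr)\int m\cdot\nabla w\, w\,\rho^{\theta-1}\,dm \geq 0,
\end{equation*}
and this is where $b\geq 6$ actually enters — it guarantees $2-b/2-\theta<0$ for the entire admissible range $-1<\theta<1$, so the term has a favorable sign and can simply be dropped in the coercivity estimate. Your statement that ``$b\geq 6$ enters because the drift coefficient $b/\rho$ must be dominated by the weight'' is too vague to carry this; the drift term cannot be dominated, only discarded by sign. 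Your last paragraph correctly anticipates that boundary integrals from integration by parts must vanish, but you do not say which quantity is being integrated by parts (it is $w^2\rho^{\theta-1}$), nor that the sign of $2-b/2-\theta$ is the crux, nor that without it the coercivity of $\mathcal{B}_0$ fails. The analogous sign argument also has to be re-checked term-by-term in the $x$-dependent estimate (\ref{lr}) and in the mass-conservation computation (\ref{5.3}), where $\nu^2/\mu_0=\rho^{2-\theta}$ replaces $\rho^{b/2}$. Until you state and prove the sign lemma, the proposal leaves the coercivity step open, so this is a real gap rather than mere bookkeeping.
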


Theorem \ref{thm1} and \ref{thm4}  tell us that for each given $q$,
which denotes the rate of $f$ approaching to zero relative to $\nu$
near $\partial B$, there exists a unique solution $(v, f)$. Also,
they indicate that any weaker boundary requirement may lead to  more
than one solutions to \eqref{main1}. For instance, the boundary
condition
\begin{equation*}
f\nu^{-1}\rho^{\ep}|_{\p B}=0, \quad \ep>0
\end{equation*}
gives infinitely many solutions to \eqref{main1}. Precisely we state
the following non-uniqueness result.

\begin{thm}\label{thm1.5}
Let $\tilde\nu$ be a smooth function of $\rho$ such that
\begin{equation}\label{Tilde_Nu}
\lim_{\rho \to 0} \frac{\nu}{\tilde\nu} =0.
\end{equation}
Then, the coupled problem \eqref{main1} with \eqref{main_boundary}
replaced by
\begin{equation}\label{another_bdy}
f(t,x,m){\tilde\nu}^{-1}|_{\p B}=0
\end{equation}
has infinitely many solutions in $\mathbf{X}_{\mu}$ and
$\mathbf{X}_{\mu_0}$ for $0<b<6$ and $b\geq 6$ respectively.
\end{thm}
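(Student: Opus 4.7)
The plan is to leverage Theorem \ref{thm1} (for $0<b<6$) and Theorem \ref{thm4} (for $b\geq 6$) as a machine that, for each admissible ratio $q$, produces a unique solution with the strong boundary behavior $f\nu^{-1}|_{\p B}=q|_{\p B}$. The key observation is that (\ref{another_bdy}) is strictly weaker than (\ref{main_boundary}) in the sense that every solution furnished by these existence theorems, for any smooth $q$, automatically satisfies (\ref{another_bdy}). Producing distinct such solutions by varying $q$ then yields non-uniqueness in the common ambient space.

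First I would establish the trivial-trace implication: if $f$ solves (\ref{main1}) with $f\nu^{-1}|_{\p B}=q|_{\p B}$ for a smooth $q$, then $f\tilde\nu^{-1}|_{\p B}=0$. Indeed, from the factorization
\[
\frac{f}{\tilde\nu}=\left(\frac{f}{\nu}\right)\frac{\nu}{\tilde\nu},
\]
the first factor has a bounded trace $q|_{\p B}$ while the second tends to $0$ at $\p B$ by hypothesis (\ref{Tilde_Nu}). After writing $f/\nu=q+w$ with $w=f/\nu-q$ vanishing at $\p B$ in the sense carried by the weighted space $\Hg$ (this is what the transformation in Section~2 guarantees), the product above inherits zero trace in the sense specified by $\tilde\nu$.

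Second I would construct a one-parameter family of solutions. Fix a single smooth nontrivial profile $\varphi(t,x,m)$ meeting the regularity hypotheses of Theorem~\ref{thm1} (resp.\ Theorem~\ref{thm4})---that is, $\varphi\in C^1_tH^{s+1}_xH^1_\mu$ (resp.\ $H^1_{\mu_0}$)---and chosen so that its boundary trace on $\p B$ is not identically zero. For each $\lambda\in\R$ set $q_\lambda=\lambda\varphi$; the existence theorem yields a unique $(v_\lambda,f_\lambda)\in\mathbf{X}_\mu$ (resp.\ $\mathbf{X}_{\mu_0}$). By the first step each $f_\lambda$ satisfies (\ref{another_bdy}). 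If $\lambda_1\neq\lambda_2$ then the strong traces $f_{\lambda_i}\nu^{-1}|_{\p B}=\lambda_i\varphi|_{\p B}$ differ on a set of positive measure in $\p B$, forcing $f_{\lambda_1}\neq f_{\lambda_2}$ as elements of $\mathbf{X}_\mu$. Hence the problem admits a continuum of distinct solutions.

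The main obstacle is purely a matter of trace interpretation: one must show that the pointwise statement $\nu/\tilde\nu\to0$ near $\p B$ transfers to the vanishing of $f\tilde\nu^{-1}$ as understood through the weighted Sobolev framework in which membership $f\nu^{-1}\in\mathbf{X}_\mu$ is posed. Because $\tilde\nu$ is explicit in $\rho$ and decays to $0$ no faster than $\nu$, a direct weighted estimate (a Hardy-type inequality relating the weights induced by $\mu$, $\tilde\nu$, and the ratio $\nu/\tilde\nu$) reduces the trace statement to the already-established boundary behavior of $w=f/\nu-q$ in $\Hg$ and the boundedness of the smooth lift $q$. Once this technical point is in place, the construction above immediately produces infinitely many solutions satisfying (\ref{another_bdy}).
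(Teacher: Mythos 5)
Your proof is correct and takes essentially the same route as the paper: invoke the existence results of Theorems \ref{thm1} and \ref{thm4} for a family of nontrivial smooth $q$, then use the factorization $f\tilde\nu^{-1}=(f\nu^{-1})(\nu/\tilde\nu)$ together with (\ref{Tilde_Nu}) to verify that each resulting solution satisfies (\ref{another_bdy}), while distinct boundary traces $q|_{\p B}$ force the corresponding solutions to be distinct. The paper sidesteps your closing ``trace interpretation'' concern simply by requiring $q\in C^\infty(B)\cap C(\overline{B})$ with $q|_{\p B}\neq 0$ (such $q$ exist by density in the weighted Sobolev space, per \cite{Ku85}), so that $q|_{\p B}$ is a bounded classical trace, $w=f\nu^{-1}-q$ has vanishing trace by construction, and $q\,\nu/\tilde\nu$ then vanishes pointwise on $\p B$ without any further Hardy-type estimate.
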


The natural question is  for what $q$  the obtained distribution $f$
is a probability distribution.   The answer when $b\geq 2$ is given
in the following theorem.

\begin{thm}\label{thm2}  Suppose that $b\geq 2$ and $q|_{\p B} \geq
0$.  Under the assumption of Theorem \ref{thm1} or \ref{thm4}, the
unique solution $f$ to the Cauchy-Direchlet problem \eqref{main1} is
a probability distribution if and only if $q|_{\p B}=0$.  That is,
$f \geq 0$ if $f_0 \geq 0$,  and for any $t>0$, $x\in \R^N$,
\begin{equation}
\int_B f(t,x,m)dm = \int_B f_0 (x, m)dm.
\end{equation}
\end{thm}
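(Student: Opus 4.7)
The plan is to derive a transport identity for the $m$-mass $M(t,x):=\int_B f(t,x,m)\,dm$ and to read off both implications of the theorem from this identity, supplementing with a maximum-principle estimate for non-negativity. The key observation is that the Dirichlet-type condition $f\nu^{-1}|_{\p B}=q|_{\p B}$ translates, via the substitution $f=\nu(q+w)$ used throughout the paper, into $w|_{\p B}=0$, so the only trace that can appear in any surface integral over $\p B$ is $q|_{\p B}$ itself.

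To extract the identity I would test the Fokker-Planck equation in \eqref{main1} against a smooth cutoff $\chi_\ep(\rho)$ that vanishes on $\{\rho\le\ep\}$ and equals $1$ for $\rho\ge 2\ep$, then pass to $\ep\to 0^+$. Writing the $m$-flux in conservative form, the advection contribution $(\nabla v)\,m\,f$ drops out because $\nu|_{\p B}=0$, while the drift and diffusive fluxes $\tfrac{1}{2}(bm/\rho)f+\tfrac{1}{2}\nabla_m f$ combine---after the logarithmic factor in $\nu$ exactly cancels the $1/\rho$ singularity in the threshold case $b=2$---to a finite limit proportional to $\int_{\p B}(q+w)\,dS=\int_{\p B}q\,dS$. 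The outcome is the identity
\begin{equation*}
\p_t M + v\cdot\nabla_x M \;=\; C_b\int_{\p B} q\, dS
\end{equation*}
for some constant $C_b>0$ valid for every $b\ge 2$ (explicitly $C_b=\sqrt{b}(b-2)/2$ for $b>2$, with a positive analogue at $b=2$ produced by the logarithmic cancellation).

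Sufficiency ($q|_{\p B}=0\Rightarrow f$ is a probability density) splits into two parts. Mass conservation follows because the right-hand side of the identity vanishes and $M$ is then transported by the divergence-free flow $v$; in particular, when $f_0$ is a probability density the same holds for $f$. Non-negativity follows from a weighted energy estimate on $f_-:=\max(-f,0)$: testing the FP equation against a suitable multiplier, using zero boundary flux (already guaranteed by $q|_{\p B}=0$) to kill the boundary terms arising in the integration by parts, gives $\frac{d}{dt}\|f_-\|^2 \le K(\|v\|_{H^s_x})\|f_-\|^2$ in the appropriate weighted norm, and Gr\"onwall forces $f_-\equiv 0$. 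Necessity is the converse: if $M$ is time-independent, the left side of the identity is zero, hence $\int_{\p B} q\, dS=0$ for a.e.\ $(t,x)$, and the sign hypothesis $q|_{\p B}\ge 0$ upgrades this to $q|_{\p B}=0$ pointwise on $\p B$.

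The main obstacle is the rigorous cutoff passage in the critical case $b=2$, where the integrand $(bm/\rho)f$ is a formal $\infty\cdot 0$: the cancellation between $\we$ in $\nu=\rho\we$ and $1/\rho$ in the drift must be tracked across the collar $\{\ep<\rho<2\ep\}$, with the remainder (which scales like $\ep\ln(1/\ep)$) controlled using $w\in L^2_t H^s_x H^1_\mu$. A secondary point is confirming that $C_b$ is strictly positive for all $b\ge 2$, which is what makes the sign hypothesis genuinely convert the integral equality into the pointwise statement $q|_{\p B}=0$.
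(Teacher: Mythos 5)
Your proposal follows essentially the same route as the paper: the paper also derives the boundary-flux identity by testing against a radial cutoff $\phi_\ep$ converging to $\chi_B$, rewrites the FP equation in the divergence form $\p_t f = -\nabla\cdot(\kappa m f) + \nabla\cdot\left(\rho^{b/2}\nabla\frac{f}{\rho^{b/2}}\right)$, and shows (Proposition~\ref{prop9}) that the drift/diffusive flux contributes $-C_0\sqrt{b}\int_{\p B} q\,dS$ with $C_0 = 2-b$ for $b>2$ and $C_0=-2$ for $b=2$, while the advection piece vanishes by the Cauchy--Schwarz estimate $\int_{B^\ep}|\nabla\phi_\ep|^2\nu^2/\mu\,dm \le C\ep^{b/2-1}$ combined with $w\in H^1_\mu$; positivity (Proposition~\ref{prop8}) is proved by the same truncation argument you describe, applied to $w^-=(f/\nu)^-$ rather than $f_-$ (a cosmetic change, since $\nu>0$ in $B$ and $q|_{\p B}\ge 0$ gives $w^-\in\Hg$), using coercivity of $\mathcal{B}$ and Gr\"onwall. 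One small refinement: the paper handles the $x$-dependence by running Propositions~\ref{prop8}--\ref{prop9} along the Lagrangian flow map (so the left side is $\frac{d}{dt}\int f\,dm$ in characteristic coordinates), which is equivalent to your transport identity $\p_t M + v\cdot\nabla_x M = C_b\int_{\p B} q\,dS$; and its positivity proof requires only $q|_{\p B}\ge 0$, not $q|_{\p B}=0$ as you use, though the weaker hypothesis suffices for the stated theorem.
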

Theorem \ref{thm1} is proven by a fixed point argument, which is now
outlined. Given $(u, g)$,  we first solve the Navier-Stokes equation
(NSE):
\begin{subequations}\label{NSE}
\begin{eqnarray}
\p_t v + (u\cdot \nabla) v + \nabla p &=& \nabla \cdot \tau + \Delta v, \label{NSE1}\\
\nabla\cdot v&=&0,\label{NSE2}\\
v(0,x)&=&v_0(x),\label{NSE3}\\
 \tau&=& \int m\otimes\frac{bm}{\rho}g dm. \label{NSE5}
\end{eqnarray}
 \end{subequations}
With the obtained $v$ we solve the Fokker-Planck equation (FPE):
\begin{subequations}\label{FPE}
\begin{eqnarray}
\p_tf+ (v\cdot \nabla)f+\nabla_m\cdot(\nabla v m f)&=&\frac12
\nabla_m
\cdot\left(\frac{bm}{\rho} f \right)+\frac12\Delta_m f,\label{FPE1}\\
f(0,x,m)&=&f_0(x,m), \label{FPE2} \\
f(t,x,m)\nu^{-1}|_{\p B}&=& q(t,x,m)|_{\p B}. \label{FPE3}
\end{eqnarray}
 \end{subequations}
The above two systems define a mapping $(u, g) \to (v, f)$, the
existence of problem (\ref{main1}) is equivalent to existence of a fixed point of this
mapping.

The main difficulty lies in monitoring the boundary behavior of
$f$. Our strategy is to apply the transformation
\begin{equation}\label{tr}
f= \nu(w+q),
\end{equation}
to (\ref{FPE}) to obtain a w-problem
\begin{subequations}\label{W}
\begin{eqnarray}
\mu (\p_t+ v\cdot \nabla) w +L[w]& =& \mu h, \label{w1}\\
w(0,x,m)&=&w_0(x,m), \label{w2} \\
w(t,x,m)|_{\p B}&=&0. \label{w3}
\end{eqnarray}
 \end{subequations}
Here the operator  $L$  is induced from the Fokker-Planck operator,
 $\nu$ and $\mu$ are weights depending on the distance functions
 defined in \eqref{nu} and \eqref{mu} respectively. The source  term is obtained from $q$
\begin{equation}\label{h}
h=-\partial_t q - (v\cdot \nabla) q-\mu^{-1}L[q],
\end{equation}
and the initial data  is given by
\begin{equation}\label{w0}
w_0(x, m):= f_0(x, m)\nu^{-1}-q(0, x, m).
\end{equation}
For given $(u,\varpi)$ with $g=\nu(\varpi+q)$,  we arrive at a map
$\F$.
\begin{eqnarray*}
\F : &\mathbf{M} & \to \mathbf{M}\\
     &(u,\varpi)&  \mapsto (v,w)
\end{eqnarray*}
 Here $\mathbf{M}$ is a subset of
 $$
 C_t H^s_x \times
[ C_tH^s_x L^2_\mu \cap L^2_t H^s_x \Hg]
$$
such that
\begin{eqnarray*}
\mathbf{M}&=& \left \{(v,w): \sup_{0\leq t\leq T} |v|^2_s \leq A_1,
\sup_{0\leq t\leq T} |w|^2_{0,s} + \frac12\int_0^{T} |\nm w|^2_{0,s} dt
\leq A_2 \right\}.
\end{eqnarray*}
The strategy for the fixed point proof, which we implement in
sections to follow,  is to first prove that $\mathcal F$ is well
defined for some $T, A_1$ and $A_2$,  then show that  $\F$ is actually a contraction map in a weak norm.
Moreover, we will show that
\begin{equation}\label{3.17}
\mathcal{F}(\mathbf{M}) \subset \mathbf{X}_\mu.
\end{equation}
This proves Theorem \ref{thm1} for
$$
q \in C^1_t H^{s+1}_x H^1_\mu \subset [C_t H^s_x L^2_\mu \cap
L^2_tH^s_xH^1_\mu].
$$

Theorem \ref{thm4} is proved in the same manner. A sketch of proof
is presented in Section 6.

In order to prove Theorem \ref{thm1.5}, we pick $q(t,x,\cdot) \in
C^\infty(B) \cap C(\overline{B})$ and $q|_{\p B}\neq 0$ such that
\begin{eqnarray*}
q\in \left\{
       \begin{array}{ll}
         C^1_tH^{s+1}_xH^1_\mu, & 0<b<6, \\
         C^1_tH^{s+1}_xH^1_{\mu_0}, & b\geq6.
       \end{array}
     \right.
\end{eqnarray*}
Note that existence of such a $q$ follows from the density of the
weighted Sobolev space (see \cite{Ku85} for details). Then for each
$q$ we have a unique solution $(v,f)$ to the coupled problem
\eqref{main1} from Theorem \ref{thm1} and Theorem \ref{thm4}. Now,
we check  the boundary condition \eqref{another_bdy}.
\begin{equation*}
f{\tilde\nu}^{-1}|_{\p B} = f\nu^{-1}\frac{\nu}{\tilde\nu}|_{\p B} =
q\frac{\nu}{\tilde \nu}|_{\p B},
\end{equation*}
which vanishes since $q|_{\p B}$ is bounded and  condition
\eqref{Tilde_Nu} holds. This proves Theorem \ref{thm1.5}.

Theorem \ref{thm2} follows from Proposition \ref{prop8} and
\ref{prop9} via a flow map to be described in Section
\ref{section4}. The case for $b\geq 6$ can be proved by a simple
modification, which is also sketched in Section 6.

\section{The Fokker-Planck operator}
We start with \eqref{FPE} when $x$ is not involved. In such a case it reduces to the following problem:
\begin{subequations} \label{ff}
\begin{eqnarray}
\p_tf+  \mathcal{L}[f]  &=& 0, \quad m\in B, t>0,  \label{LFPE1}\\
f(0,m)&=&f_0(m), \label{LFPE2} \\
f(t,m)\nu^{-1}|_{\p B}&=& q(t,m)|_{\p B}.\label{LFPE3}
\end{eqnarray}
\end{subequations}
Here
\begin{equation}\label{of}
\mathcal{L}[f] :=  \nabla\cdot(\kappa m f)- \frac12 \nabla
\cdot\left(\frac{bm}{\rho} f \right)- \frac12\Delta f,
\end{equation}
$\kappa=\kappa(t)$ is a square integrable 
matrix function such that
$\text{Tr}(\kappa)=0$. We omit $m$ from $\nabla_m$ in (\ref{of}) for notational
convenience.

The goal of this section is two folds;
\begin{enumerate}
  \item[(1)] to provide tools for subsequent sections.
  \item[(2)] to elaborate on this model alone as an extension of our
  previous work \cite{LiSh08}.
\end{enumerate}

\subsection{Transformed operator}\label{section3.1}

The transformation (\ref{tr}) leads to
\begin{subequations} \label{ww}
\begin{eqnarray}
 \p_t w\mu + L[w] &=& \mu h,  \quad m\in B, t>0, \label{4.4}\\
w(0,m) &=&w_0, \label{4.5}\\
w(t,m)|_{\p B} &=&0,  \label{4.6}
\end{eqnarray}
\end{subequations}
with the transformed operator $L$  determined by
\begin{equation}\label{lw0}
L[w]=\mu\nu^{-1} \mathcal{L}[\nu w].
\end{equation}
The source term  $h=-\partial_t q -\mu^{-1}L[q]$ and initial data for $w$ is $w_0=f_0\nu^{-1}-q(0,
m)$.

From a direct calculation with the choice of $\mu$ in (\ref{mu}),
and $\nu$  in \eqref{nu}, \eqref{lw0} can be expressed as
\begin{equation}\label{lw}
L[w]= -\frac12 \nabla \cdot (\nabla w \mu) + \nabla \cdot (\kappa m
w \mu) -Kw,
\end{equation}
where
\begin{equation}\label{K}
K=\left\{
      \begin{array}{ll}
        0, & 0<b<2, \\
        (N+2\kappa m\cdot m)  \dis \ln \frac{e}{\rho}, &  b=2, \\
        (N+2\kappa m\cdot m) (b/2-1) \rho^{1-b/2}, & b>2.
      \end{array}
    \right.
\end{equation}

Associated with  the operator $L$, we define its time-dependent
bilinear form
\begin{equation}\label{bb}
\mathcal{B}[w,\phi; t]:=  \int  \left(  \frac{1}{2}\nabla w\cdot \nabla \phi
\mu - w\mu \kappa m \cdot \nabla \phi - Kw\phi  \right)dm
\end{equation}
for $\phi, w \in \Hg$ and  fixed $t>0$.

We now describe the weak solution which we are looking for.

\begin{defn}\label{defn1}
A function $w \in \H$
is a weak solution of w-problem (\ref{ww}), provided
\begin{enumerate}
  \item[(1)] For each $\phi\in \Hg$ and almost every $0\leq t\leq
  T$,
  \begin{equation}\label{4.9}
(\p_t w, \phi)_{\Hg} + \mathcal{B}[w,\phi;t] = ( h, \phi)_{\Hg}.
\end{equation}
   \item[(2)] $\dis w(0,m)=w_0(m)$ in $L^2_\mu $ sense. i.e.
  \begin{equation*}\label{2.25}
   \int_{B} |w(0,m)-w_0(m)|^2 \mu dm =0.
  \end{equation*}
\end{enumerate}
\end{defn}
\begin{rmk}
In (\ref{4.9}),  $(\psi,\phi)_{\Hg}$ is a dual pair for $\psi\in
(\Hg)^*$ and $\phi\in \Hg$,  and  can be regarded as $L^2_\mu$ inner
product. Indeed, from the Riesz representaiton theorem, for each
$\psi\in (\Hg)^*$ there exists a unique $u \in \Hg$ such that
\begin{eqnarray*}
(\psi,\phi)_{\Hg}=\int_B (\nabla u \cdot \nabla \phi + u\phi) \mu dm.
\end{eqnarray*}
Formally, the right hand side will be
\begin{equation*}
\int_B  (\nabla\cdot (\nabla u\mu)\mu^{-1}+u ) \phi \mu dm.
\end{equation*}
We identify $\psi$ as $\nabla \cdot (\nabla u\mu)\mu^{-1}+u$ and the dual
pair will be the $L^2_\mu$ inner product.

\begin{rmk}
With the weight function $\mu$ so chosen as \eqref{mu}, we observe
that if $\phi \in H^1_\mu$, then  $\phi \in W^{1, 1}$ since
\begin{eqnarray*}
\int_B  ( |\phi| + |\nabla \phi| ) dm \leq C \left(\int_B (|\phi|^2
+ |\nabla \phi|^2)\mu dm \right)^{1/2} \left(\int_B \mu^{-1}
dm\right)^{1/2}< \infty.
\end{eqnarray*}
From the standard trace theorem,  the  map
 \begin{eqnarray*}
       \mathcal{T}: &H^1_{\mu}(B)& \to L^1(\p B)\\
          &\phi & \mapsto \phi|_{\p \Omega}
    \end{eqnarray*}
is well defined. Thus, the element in $\Hg$ is characterized by the
zero trace,  and the Dirichlet data \eqref{4.6} makes sense.
\end{rmk}
\end{rmk}

The well-posedness of the w-problem (\ref{ww}) is stated in the
following.

\begin{thm}\label{thm6}
Suppose that $w_0\in L^2_\mu$,  $h\in L^2_t(\Hg)^*$ and $\kappa \in
L^2_t$ with $\text{Tr}(\kappa)=0$.  Then the
w-problem (\ref{ww}) has a unique weak solution in $\H$ such that
\begin{equation}\label{w_estimate2}
||w||^2_\Ht \leq e^{F(|\kappa|)}(||w_0||^2_{L^2_\mu} + || h||^2_{L^2_t(\Hg)^*})
\end{equation}
with $F$ defined in (\ref{F}).
\end{thm}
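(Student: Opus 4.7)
The plan is to employ a Faedo--Galerkin scheme adapted to the weighted Sobolev space $\Hg$. First I would choose a countable family $\{e_k\}\subset \Hg$ that is orthogonal in $\Hg$ and orthonormal in $L^2_\mu$ (available since $\Hg$ is separable), and seek approximate solutions $w_N(t,m)=\sum_{k=1}^N c_k^N(t)\,e_k(m)$ satisfying
\begin{equation*}
(\partial_t w_N,e_k)_{L^2_\mu} + \mathcal{B}[w_N,e_k;t] = (h,e_k)_{\Hg},\qquad c_k^N(0)=(w_0,e_k)_{L^2_\mu}.
\end{equation*}
Since $\kappa\in L^2_t$ and both the convective coefficient and the singular lower-order coefficient $K$ appearing in $\mathcal{B}$ depend affinely on $\kappa$, this is a linear ODE system in the coefficients $c_k^N$ with $L^2_t$ coefficients, and Carath\'eodory theory yields a unique absolutely continuous solution on $[0,T]$.

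The heart of the argument is the a priori estimate obtained by taking $\phi=w_N$ in the Galerkin identity. Using $w_N|_{\partial B}=0$ and $\mathrm{Tr}(\kappa)=0$ to integrate by parts in the convective term, the bilinear form produces the principal coercive piece $\tfrac12\int|\nabla w_N|^2\mu\,dm$, a kinetic remainder of the form $\tfrac12\int w_N^2\,\kappa m\cdot\nabla\mu\,dm$, and the singular contribution $-\int K w_N^2\,dm$. Near $\partial B$ one has $K\sim\rho^{1-b/2}$ while $\mu\sim\rho^{2-b/2}$ for $b>2$, so $Kw_N^2\sim \mu\rho^{-1}w_N^2$; the analogous balance with logarithmic corrections holds at $b=2$. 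Controlling these terms hinges on a Hardy-type inequality of the form
\begin{equation*}
\int \frac{w^2\mu}{\rho^2}\,dm \;\leq\; C\int |\nabla w|^2 \mu\,dm,\qquad w\in\Hg,
\end{equation*}
together with its logarithmic variant at $b=2$, both valid thanks to the specific exponent $2-b/2$ (respectively the weight $\rho\ln^2(e/\rho)$) and the zero-trace condition built into $\Hg$. Combining this with Cauchy--Schwarz on the kinetic remainder and Young's inequality yields a differential inequality
\begin{equation*}
\frac{d}{dt}\|w_N\|^2_{L^2_\mu}+\tfrac14\|\nabla w_N\|^2_{L^2_\mu}\;\leq\; C(1+|\kappa(t)|^2)\|w_N\|^2_{L^2_\mu}+\|h\|^2_{(\Hg)^*},
\end{equation*}
and Gronwall's lemma delivers \eqref{w_estimate2} uniformly in $N$, the factor $e^{F(|\kappa|)}$ encoding the integrated $|\kappa|^2$ contribution.

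With uniform bounds on $\{w_N\}$ in $L^\infty_t L^2_\mu\cap L^2_t\Hg$ and a uniform bound on $\{\partial_t w_N\}$ in $L^2_t(\Hg)^*$ (obtained by estimating $\mathcal{B}[w_N,\phi;t]$ against $\|\phi\|_{\Hg}$ directly from the Galerkin identity), I would extract weakly (and weakly-$*$) convergent subsequences and pass to the limit in that identity to produce $w\in\H$ satisfying \eqref{4.9}. Continuity in $t$ with values in $L^2_\mu$, which makes the initial condition meaningful, follows from the standard Lions--Magenes embedding for the Gelfand triple $\Hg\subset L^2_\mu\subset (\Hg)^*$. Uniqueness is obtained by applying the same energy estimate to the difference of two solutions with $h=0$ and $w_0=0$. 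The principal obstacle is the weighted Hardy inequality at the critical exponent dictated by $\mu$, especially at the borderline case $b=2$ where $\mu$ carries $\ln^2(e/\rho)$ and $K$ carries $\ln(e/\rho)$. The careful engineering of $\mu$ in \eqref{mu}---and not, for instance, the simpler choice $\rho^{2-b/2}$ uniformly---is precisely what makes the two singular contributions balance and closes the energy estimate; verifying this balance with sharp constants is the delicate technical point of the argument.
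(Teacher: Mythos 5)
Your proposal follows essentially the same route as the paper: Galerkin approximation with a basis of $\Hg$ (and $L^2_\mu$), coercivity and boundedness of the time-dependent bilinear form $\mathcal{B}$ obtained from a weighted embedding controlling $\int w^2\mu^*\,dm$ by $\|w\|^2_{H^1_\mu}$ with $\mu^*$ the conjugate weight, a Gronwall-type a priori estimate in $L^\infty_t L^2_\mu\cap L^2_t\Hg$ together with a dual bound on $\partial_t w_N$, weak compactness to pass to the limit, and uniqueness by the same energy argument applied to a difference. The only substantive deviation is cosmetic: you invoke a pure Hardy inequality $\int w^2\mu\rho^{-2}\,dm\le C\int|\nabla w|^2\mu\,dm$, whereas the paper's Lemma~\ref{lem1} proves the slightly weaker statement $\int w^2\mu^*\,dm\le C\int(|w|^2+|\nabla w|^2)\mu\,dm$, which is all that is used since the extra $\int w^2\mu\,dm$ is absorbed by Gronwall; also, for $0<b<2$ there is in fact no singular $K$ term to balance ($K\equiv 0$ in that regime), so the balancing you describe is relevant only for $b\ge 2$. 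Neither point affects the correctness of the argument.
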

This result when $b>2$ and $q=0$ was proved in \cite{LiSh08}. For
general case we proceed in several steps.

\subsection*{An embedding theorem}
We define
\begin{equation}\label{ms}
\mu^*=\left\{
      \begin{array}{ll}
        \rho^{b/2-2}, & 0<b<2, \\
        \rho^{-1}, &  b=2, \\
        \rho^{-b/2}, & b>2.
      \end{array}
    \right.
\end{equation}
We call $\mu^*$  as the conjugate of $\mu$ due to the Sobolev
inequalities in the following lemma.

\begin{lem}\label{lem1}
  If  $\phi \in \Hg $, then
    \begin{equation}\label{2.27}
       \int |\phi|^2 \mu^* dm \leq C \int (|\phi|^2+|\nabla \phi|^2)
\mu dm.
    \end{equation}
Also, if $\phi \in H^1_{\rho^{\theta}}$ for $\theta \leq 1$, then
for any $\delta>0$
\begin{equation}\label{2.27-1}
\int |\phi|^2 \rho^{-1+\delta}dm \leq C \int (|\phi|^2+|\nabla
\phi|^2) \rho^\theta dm.
\end{equation}

\end{lem}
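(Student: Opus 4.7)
The plan is to prove both Hardy-type weighted inequalities via a single integration-by-parts identity obtained from pairing $\phi^2$ with a radial vector field $m\,g(\rho)$, where $g$ is chosen so that $-2g'(\rho)$ matches the target singular weight up to a positive constant. By density of $C^\infty_c(B)$ in $\Hg$ (and in $H^1_{\rho^\theta}$ for part~(2)), it suffices to argue with $\phi\in C^\infty_c(B)$; for such $\phi$ the divergence identity $\nabla\cdot(m\,g(\rho))=N\,g(\rho)-2|m|^2 g'(\rho)$ (using $\nabla\rho=-2m$) combined with integration by parts (no boundary contribution by compact support) gives the key relation
\begin{equation*}
N\int_B \phi^2 g(\rho)\,dm - 2\int_B \phi^2 |m|^2 g'(\rho)\,dm = -2\int_B \phi\,\nabla\phi\cdot m\, g(\rho)\,dm.
\end{equation*}

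For part (1) I would take $g=\rho^{1-b/2}$ when $b>2$, $g=\rho^{b/2-1}$ when $0<b<2$, and $g=\ln(e/\rho)$ when $b=2$. In each case $-2g'(\rho)$ is a positive multiple of $\mu^*$, and a direct check using \eqref{mu} and \eqref{ms} shows $g(\rho)^2=\mu\mu^*$, so the factor $m\,g(\rho)$ on the right of the identity splits naturally into one piece carrying weight $\mu^*$ and another carrying weight $\mu$; Young's inequality then gives
\begin{equation*}
\Bigl|2\int_B \phi\,\nabla\phi\cdot m\,g(\rho)\,dm\Bigr|\leq \epsilon\int_B\phi^2|m|^2\mu^*\,dm+C_\epsilon\int_B|\nabla\phi|^2\mu\,dm.
\end{equation*}
Choosing $\epsilon$ small and dropping the positive $N\int\phi^2 g(\rho)\,dm$ term from the left of the identity yields $\int\phi^2|m|^2\mu^*\,dm\leq C\int|\nabla\phi|^2\mu\,dm$. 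The $|m|^2$ factor is removed by splitting $B$ into $\{|m|^2\geq b/2\}$ (where $\int\phi^2\mu^*\leq (2/b)\int\phi^2|m|^2\mu^*$) and $\{|m|^2<b/2\}$ (where $\rho\geq b/2$ makes $\mu,\mu^*$ mutually bounded, so $\int_{|m|^2<b/2}\phi^2\mu^*\,dm\leq C\int\phi^2\mu\,dm$), producing (\ref{2.27}).

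For part (2), I would apply the same identity with $g(\rho)=\rho^\sigma$ for $\sigma>0$ to be chosen. After Young's inequality and the same split-domain reduction one obtains
\begin{equation*}
\int \phi^2 \rho^{\sigma-1}\,dm \leq C_\sigma\int \phi^2 \rho^\sigma\,dm + C_\sigma\int |\nabla\phi|^2 \rho^{\sigma+1}\,dm + C\int \phi^2\rho^\theta\,dm.
\end{equation*}
Because $\theta\leq 1$, $\rho^{\sigma+1}\leq C\rho^\theta$ for any $\sigma\geq 0$, while $\rho^\sigma\leq C\rho^\theta$ holds once $\sigma\geq \theta$. Taking $\sigma=\delta$ when $\delta\geq\theta$ gives (\ref{2.27-1}) immediately. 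When $0<\delta<\theta$, I would iterate the identity along $\sigma_k=\delta+k$ until $\sigma_K\geq\theta$; each step bounds $\int \phi^2\rho^{\sigma_k-1}\,dm$ by $\int\phi^2\rho^{\sigma_{k+1}-1}\,dm$ plus terms already controlled by $\int(\phi^2+|\nabla\phi|^2)\rho^\theta\,dm$, and the chain telescopes to the desired estimate.

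The principal obstacle is the critical case $b=2$ of part (1): no pure power $g(\rho)=\rho^\alpha$ simultaneously satisfies $-2g'\propto \rho^{-1}$ and $g^2\propto \mu\mu^*$. The logarithmic choice $g(\rho)=\ln(e/\rho)$ is exactly what is required, and its two $\ln(e/\rho)$ factors split cleanly under Young's inequality---one into $\mu^*=\rho^{-1}$ and the other into $\mu=\rho\,\wet$. This confirms that the logarithmic correction in $\mu$ must appear precisely at $b=2$ and nowhere else.
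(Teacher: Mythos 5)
Your integration-by-parts approach via the vector field $m\,g(\rho)$ is a genuinely different route from the paper's. The paper simply cites Kufner for \eqref{2.27} when $b\neq 2$ and for \eqref{2.27-1}, and proves only the $b=2$ case of \eqref{2.27} directly, using spherical coordinates, a one-dimensional weighted Hardy inequality from Kufner--Persson, and a delicate estimate of the derivative $G_r$. Your argument is unified and self-contained: the algebra is correct ($-2g'$ is indeed a positive multiple of $\mu^*$, and $g^2=\mu\mu^*$ in each of the three regimes), the Young-inequality split and the two-region removal of $|m|^2$ both work, and the passage to $\phi\in\Hg$ via Fatou is routine. The observation that the logarithmic weight is \emph{forced} at $b=2$ by the requirement that a single $g$ reproduce both $\mu^*$ through $-2g'$ and $\mu\mu^*$ through $g^2$ is a nice conceptual by-product that the paper does not make explicit. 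Part (2) as a whole is also sound: the sign works out after rearrangement because the term $\int\phi^2|m|^2\rho^{\sigma-1}dm$ carries the \emph{opposite} sign to the one in part~(1) and must be moved across, which you correctly exploit, and since $\theta\leq 1$ and $\delta>0$, at most one or two iterations along $\sigma_k=\delta+k$ suffice (the stated stopping criterion $\sigma_K\geq\theta$ is off by one --- you need $\sigma_K-1\geq\theta$ --- but that is cosmetic).

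There is one genuine flaw in the justification of part (2): the claim that $C^\infty_c(B)$ is dense in $H^1_{\rho^\theta}$ for $\theta\leq 1$ is false. For $-1<\theta\leq 1$ the constant function $\phi\equiv 1$ belongs to $H^1_{\rho^\theta}$ but not to $\Hgo[\rho^\theta]$-type closure: any $\phi_n\in C^\infty_c$ with $\nabla\phi_n\to 0$ in $L^2_{\rho^\theta}$ would force $\int\phi_n^2\rho^{\theta-2}dm\to 0$ by Hardy's inequality, contradicting $\phi_n\to 1$ and the divergence of $\int\rho^{\theta-2}dm$. Indeed this is exactly why the lemma must be phrased with $H^1_{\rho^\theta}$ rather than a zero-trace space. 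Fortunately your proof survives with a small repair: because $g(\rho)=\rho^\sigma$ with $\sigma>0$ vanishes on $\p B$, the boundary term $\sqrt{b}\int_{\p B}\phi^2\rho^\sigma\,dS$ vanishes identically for every $\phi\in C^\infty(\overline B)$, not merely for compactly supported $\phi$; and $C^\infty(\overline B)$ \emph{is} dense in $H^1_{\rho^\theta}$ for $\theta>-1$. Replacing the density class and noting the vanishing boundary term closes the gap. You should make this explicit, since as written the argument rests on a false density statement.
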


\begin{proof}
 We refer to  \cite{Ku85} for a proof of \eqref{2.27} when $b\neq 2$,
as well as  \eqref{2.27-1}.
Here, we prove only  the case  $b=2$.

 First for $C=\max_{1\leq \rho \leq 2} [\rho
\mu]^{-1}$ we have
\begin{align*}
\int_B     |\phi|^2 /\rho  dm & \leq C \int_{1\leq \rho \leq 2} |\phi|^2 \mu dm +\int_{0\leq \rho \leq 1} |\phi|^2/\rho dm \\
& \leq C \int_B |\phi|^2 \mu dm +\int_0^1\frac{G^2}{\rho}d\rho,
\end{align*}
where we have used  the spherical coordinator representation with $\rho=2-r^2$ and
\begin{equation}\label{g2}
G^2(\rho)= - \int_{|\xi|=1} |\phi(r\xi)|^2 r^{N-1}dS_\xi \cdot \left(
\frac{d \rho }{d r} \right)^{-1}=\frac{1}{2} \int_{|\xi|=1} |\phi(r\xi)|^2 r^{N-2}dS_\xi.
\end{equation}
Note that from $\phi  \in \Hg$ one can verify that  $G(0)=0$.  It is known (see \cite{KuPe03}))  that
\begin{equation*}
\int_0^1 \left(\int_0^x g(t) dt\right)^2 \frac1x dx \leq C \int_0^1 g^2(x) x
|\ln x|^{2} dx.
\end{equation*}
Thus,
\begin{equation}\label{000}
\int_0^1 \frac{G^2}{\rho}d\rho \leq C \int_0^1 (G_\rho)^2 \rho |\ln \rho |^2 d\rho \leq  C \int_0^1 \frac{G^2_r}{r^2}\mu d\rho
\leq C \int_0^1 (G_r)^2 \mu d\rho,
\end{equation}
where we have used the fact that $\rho |\ln \rho|^2\leq \mu=\rho\ln^2\left(e/\rho\right)$.  Differentiation of (\ref{g2}) in term of $r$  leads to
\begin{align*}
2GG_r &=  \int_{|\xi|=1} \phi \nabla \phi \cdot \xi r^{N-2}dS_\xi + \frac{N-2}{2} \int_{|\xi|=1} |\phi(r\xi)|^2 r^{N-3} dS_\xi.
\end{align*}
Squaring both sides and  using the Cauchy-Schwartz inequality  we obtain
$$
 4G^2(G_r)^2 \leq 2 \int_{|\xi|=1}\phi^2 r^{N-2}dS_\xi \int_{|\xi|=1}|\nabla \phi|^2r^{N-2}dS_\xi +\frac{(N-2)^2}{2} \left( \int_{|\xi|=1}\phi^2 r^{N-2}dS_\xi  \right)^2.
 $$
 where we have used the fact $r\geq 1$.  Hence
 $$
(G_r)^2  \leq  \int_{|\xi|=1}|\nabla \phi(r\xi)|^2r^{N-2}dS_\xi +\frac{(N-2)^2}{2}G^2,
$$
which inserted into (\ref{000}) ensures that the term $\int_0^1
\frac{G^2}{\rho}d\rho$ is also bounded by $C\|\phi\|^2_{H^1_\mu}$.
The proof is now complete.
\end{proof}

\subsection*{Energy estimates} We return now to the bilinear operator  $\mathcal{B}$.
\begin{lem}[Energy estimates]\label{lem Energy} For any $t$, there exists a constant
$C$ which is dependent on $N,b$ such that
\begin{enumerate}
  \item[(1)] for  $w(t,\cdot) \in \Hg$
\begin{equation}\label{estimate B}
\frac14 \int |\nabla w|^2 \mu dm \leq \mathcal{B}[w,w;t] +
C(1+|\kappa|^2) \int w^2\mu dm;
\end{equation}
  \item[(2)] for $\psi(t,\cdot) \in H^1_\mu$  and $\phi \in \Hg$,
\begin{equation}\label{estimate B2}
 |\mathcal{B}[\psi,\phi;t]| \leq C(1+|\kappa|) ||\psi||_{H^1_\mu} ||\phi||_{H^1_\mu}.
\end{equation}
\end{enumerate}
\end{lem}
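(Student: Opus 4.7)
\medskip

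\noindent\textbf{Proof proposal.} The plan for (1) is to test the bilinear form against $\phi=w$, isolate the natural coercive term $\tfrac12\int|\nabla w|^2\mu\,dm$, and bound both the drift term and the indefinite potential $K$-term by the weighted Sobolev embedding of Lemma~\ref{lem1}. Setting $\phi=w$ in (\ref{bb}) gives
\[
\mathcal{B}[w,w;t] \;=\; \frac{1}{2}\int_B |\nabla w|^2 \mu\,dm \;-\; \int_B w\mu\,\kappa m\cdot\nabla w\,dm \;-\; \int_B K w^2\,dm.
\]
Since $|m|\leq\sqrt{b}$ on $B$, a weighted Cauchy--Schwarz and Young's inequality yield
\[
\left|\int_B w\mu\,\kappa m\cdot\nabla w\,dm\right| \;\leq\; \frac{1}{8}\int_B|\nabla w|^2\mu\,dm + C|\kappa|^2\int_B w^2\mu\,dm.
\]

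The crucial observation for the $K$-term is the pointwise estimate $|K|\leq C(1+|\kappa|)\sqrt{\mu\mu^{*}}$, which is checked case by case from (\ref{K}) together with (\ref{mu}) and (\ref{ms}): it is trivial for $0<b<2$ since $K=0$; for $b=2$ one has $\sqrt{\mu\mu^{*}}=\ln(e/\rho)$; and for $b>2$ one has $\sqrt{\mu\mu^{*}}=\rho^{1-b/2}$. Cauchy--Schwarz then gives
\[
\left|\int_B K w^2\,dm\right| \;\leq\; C(1+|\kappa|)\Bigl(\int_B w^2\mu\,dm\Bigr)^{1/2}\Bigl(\int_B w^2\mu^{*}\,dm\Bigr)^{1/2},
\]
and a further Young inequality (splitting off a small $\delta\int w^2\mu^{*}dm$) followed by Lemma~\ref{lem1} to bound $\int_B w^2\mu^{*}dm$ by $C\|w\|_{H^1_\mu}^{2}$ produces, after choosing $\delta$ small,
\[
\left|\int_B K w^2\,dm\right| \;\leq\; \frac{1}{8}\int_B |\nabla w|^2\mu\,dm \;+\; C(1+|\kappa|^2)\int_B w^2\mu\,dm.
\]
Combining the three estimates and absorbing the two $\tfrac18\int|\nabla w|^2\mu\,dm$ contributions into the leading $\tfrac12\int|\nabla w|^2\mu\,dm$ on the right yields (\ref{estimate B}) with the prefactor $\tfrac14$ on the left.

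For (2), I would treat the three summands of $\mathcal{B}[\psi,\phi;t]$ by Cauchy--Schwarz: the symmetric piece is bounded by $\tfrac12\|\nabla\psi\|_{L^2_\mu}\|\nabla\phi\|_{L^2_\mu}$; the drift, using $|m|\leq\sqrt{b}$, by $C|\kappa|\|\psi\|_{L^2_\mu}\|\nabla\phi\|_{L^2_\mu}$; and the $K$-term, via the same factorization $|K|\leq C(1+|\kappa|)\sqrt{\mu\mu^{*}}$, by placing $\mu^{1/2}$ on $\psi$ and $\mu^{*1/2}$ on $\phi$, then invoking Lemma~\ref{lem1} on $\phi\in\Hg$ to obtain $C(1+|\kappa|)\|\psi\|_{L^2_\mu}\|\phi\|_{H^1_\mu}$. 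Summing yields (\ref{estimate B2}). The main obstacle throughout is the treatment of the indefinite, boundary-singular potential $K$ in the regime $b\geq 2$: neither $\|w\|_{L^2_\mu}$ nor direct use of the Dirichlet term can absorb $\int K w^2\,dm$, and the workable mechanism is precisely the identity $|K|\lesssim(1+|\kappa|)\sqrt{\mu\mu^{*}}$, which matches the conjugate embedding of Lemma~\ref{lem1} and routes the singularity through the $\mu^{*}$-weighted $L^2$-norm of zero-trace functions; the $(1+|\kappa|)$ factor is then dispatched by Young's inequality, trading an $\epsilon$ of the gradient against a $|\kappa|^2$ coefficient in front of $\|w\|_{L^2_\mu}^2$, exactly in the form required for a later Gr\"onwall closure.
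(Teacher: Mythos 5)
Your proposal is correct and follows essentially the same route as the paper: test against $\phi=w$, peel off the drift term by weighted Cauchy--Schwarz and Young, observe the pointwise bound $|K|\le C(1+|\kappa|)\sqrt{\mu\mu^*}$ in each of the three regimes, then pass the $\mu^*$-weighted $L^2$ norm through Lemma~\ref{lem1} and absorb the small gradient contribution; part (2) is the same term-by-term Cauchy--Schwarz argument. The only difference is bookkeeping — you split the gradient absorption as $\frac18+\frac18=\frac14$, while the paper absorbs $\frac14$ from the drift term alone and then invokes ``$\varepsilon$ small'' for the $K$-piece, which strictly speaking yields a slightly smaller coefficient than $\frac14$; your version is the cleaner way to land exactly on the stated constant.
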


\begin{proof} From (\ref{bb}) it follows
\begin{eqnarray}\label{b1}
     \frac12 \int \nabla w \cdot \nabla \phi \mu dm & =&  \mathcal{B}[w,\phi;t] + \int \kappa m \cdot
  \nabla \phi w \mu dm  +\int  K  w \phi dm,
   \end{eqnarray}
   where $K$ is given in (\ref{K}).

(1) If  $0<b<2$,  then $K=0$; hence
    \begin{eqnarray}\label{bw}
     \frac12 \int |\nabla w|^2 \mu dm&=& \mathcal{B}[w,w;t] + \int \kappa m \cdot
 \nabla ww \mu dm\\ \notag
   &\leq& \mathcal{B}[w,w;t] + \frac14 \int |\nabla w|^2 \mu dm + b|\kappa|^2 \int
w^2 \mu dm
    \end{eqnarray}
and
\begin{eqnarray*}
|\mathcal{B}[\psi,\phi;t]| &\leq&  \frac12 \int |\nabla \psi||\nabla
\phi| \mu dm
+ \sqrt{b} |\kappa|  \int |\psi||\nabla \phi|\mu dm \\
&\leq& C(1+|\kappa|) ||\psi||_{H^1_\mu} ||\nabla \phi||_{L^2_\mu}.
\end{eqnarray*}
(2) For $b\geq 2$, it suffices to estimate the $K$-related term. If
$b=2$, we have
$$
K=(N+2\kappa m\cdot m)\ln \frac{e}{\rho}\leq (N+2b|\kappa|)\sqrt{\mu
\mu^*}.
$$
If $b>2$, we have
\begin{align*}
K&  =\left(\frac{b}{2}-1 \right)\rho^{1-b/2} (N+2\kappa m \cdot m)\\
  & \leq  \left(\frac{b}{2}-1\right)(N+2b|\kappa|)\sqrt{\mu \mu^*}.
\end{align*}
Hence for $b\geq 2$ we have
\begin{align*}
\int K w^2 dm & \leq C(1+|\kappa|)\int w^2 \sqrt{\mu \mu^*}dm \\
& \leq  \ep \int w^2 \mu^*dm + C_\ep(1+|\kappa|^2) \int w^2 \mu dm.
\end{align*}
This when added upon right side of  (\ref{bw}) using \eqref{2.27}
with some small $\ep$  leads to \eqref{estimate B}.  Using
\eqref{2.27} again we have
$$
\left| \int K \psi \phi dm \right|\leq C(1+|\kappa|) \int
|\psi||\phi| \sqrt{\mu\mu^*}dm \leq C(1+|\kappa|) ||\psi||_{H^1_\mu}
||\phi||_{H^1_\mu},
$$
which when combined with  the above  estimate for $b<2$ gives
\eqref{estimate B2}.
\end{proof}

\subsection*{A priori estimate}
\begin{lem}[A priori estimates]\label{lem10} Let $w$ be a weak solution to (\ref{ww}). Then
\begin{equation}\label{w_estimate4}
\sup_t ||w(t,\cdot)||^2_{L^2_\mu} +\frac12 ||w||^2_{L^2_tH^1_\mu} \leq e^{F(|\kappa|)}  \left( ||w_0||^2_{L^2_\mu} +
||h||^2_{L^2_t(\Hg)^*}\right).
\end{equation}
with $F$ defined in (\ref{F}),  and furthermore
\begin{equation}\label{w_estimate}
||w||^2_\mathcal{H} \leq e^{F(|\kappa|)} (||w_0||^2_{L^2_\mu} + ||
h||^2_{L^2_t((\Hg)^*)}).
\end{equation}
\end{lem}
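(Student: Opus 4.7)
The plan is to run a standard energy argument: test the weak formulation \eqref{4.9} against $\phi=w$ itself, extract coercivity from Lemma \ref{lem Energy}(1), absorb the right-hand side via a weighted Young inequality, and then close using Gronwall. Since $w\in\H$, the standard regularity for parabolic evolution triples (applied to the Gelfand triple $\Hg\hookrightarrow L^2_\mu\hookrightarrow(\Hg)^*$, with the duality pairing identified with the $L^2_\mu$ inner product as in Remark 2) gives $t\mapsto\|w(t,\cdot)\|_{L^2_\mu}^2$ absolutely continuous with $\tfrac{d}{dt}\|w\|_{L^2_\mu}^2 = 2(\p_t w,w)_{\Hg}$. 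That is the only nontrivial technical ingredient.

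With that identity in hand, choosing $\phi=w$ in \eqref{4.9} yields
\begin{equation*}
\tfrac12\tfrac{d}{dt}\|w\|^2_{L^2_\mu} + \mathcal{B}[w,w;t] = (h,w)_{\Hg}.
\end{equation*}
I would then apply \eqref{estimate B} to bound $\mathcal{B}[w,w;t]\ge \tfrac14\|\nm w\|^2_{L^2_\mu}-C(1+|\kappa|^2)\|w\|^2_{L^2_\mu}$, and for the right-hand side use
$|(h,w)_{\Hg}|\le \|h\|_{(\Hg)^*}\|w\|_{\Hg}\le \delta\|w\|^2_{\Hg}+C_\delta\|h\|^2_{(\Hg)^*}$.
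Picking $\delta$ small enough that $\delta\|\nm w\|^2_{L^2_\mu}$ is absorbed in $\tfrac14\|\nm w\|^2_{L^2_\mu}$ leaves a differential inequality of the form
\begin{equation*}
\tfrac{d}{dt}\|w\|^2_{L^2_\mu}+\tfrac14\|\nm w\|^2_{L^2_\mu}\le C(1+|\kappa|^2)\|w\|^2_{L^2_\mu}+C\|h\|^2_{(\Hg)^*}.
\end{equation*}
Gronwall produces the desired control of $\sup_t\|w\|^2_{L^2_\mu}$ with the factor $e^{F(|\kappa|)}$, and then integration in $t$ of the inequality (the sup bound handles the zero-order piece in $\|w\|^2_{L^2_t\Hg}$) delivers \eqref{w_estimate4}.

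For the second estimate \eqref{w_estimate}, I would bound $\|\p_t w\|_{(\Hg)^*}$ pointwise in $t$: for every $\phi\in\Hg$ with $\|\phi\|_{H^1_\mu}\le 1$, the weak formulation together with \eqref{estimate B2} gives
\begin{equation*}
|(\p_t w,\phi)_{\Hg}|\le \|h\|_{(\Hg)^*}+C(1+|\kappa|)\|w\|_{H^1_\mu},
\end{equation*}
so that $\|\p_t w\|^2_{(\Hg)^*}\le 2\|h\|^2_{(\Hg)^*}+C(1+|\kappa|^2)\|w\|^2_{H^1_\mu}$. Integrating in $t$ and inserting \eqref{w_estimate4} for the $\|w\|^2_{L^2_t H^1_\mu}$ term yields \eqref{w_estimate}, with the same exponential factor $e^{F(|\kappa|)}$ after possibly enlarging the constant in $F$.

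The main obstacle is the rigorous justification of $\tfrac{d}{dt}\|w\|_{L^2_\mu}^2 = 2(\p_t w,w)_{\Hg}$ in the degenerate/singular weighted setting; once Lemma \ref{lem1} guarantees $\Hg\subset L^2_\mu\subset(\Hg)^*$ is a genuine Gelfand triple via the identification in Remark 2, the usual density/mollification argument from the parabolic theory carries over verbatim. Everything else is algebraic manipulation plus Gronwall.
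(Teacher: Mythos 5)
Your proposal is correct and follows essentially the same route as the paper: test the weak formulation with $\phi=w$, invoke the coercivity bound \eqref{estimate B} and Young's inequality to reach the differential inequality \eqref{00}, apply Gronwall for \eqref{w_estimate4}, and separately bound $\|\p_t w\|_{(\Hg)^*}$ via \eqref{estimate B2} to obtain \eqref{w_estimate}. The only cosmetic differences are the order in which the two estimates are derived and your explicit remark on the Gelfand-triple justification of $\tfrac{d}{dt}\|w\|^2_{L^2_\mu}=2(\p_t w,w)_{\Hg}$, which the paper uses tacitly.
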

\begin{proof}
From the weak solution definition in (\ref{4.9}) we have for any
$\phi \in \Hg$
 \begin{equation}\label{wp}
(\p_t w, \phi)_{\Hg} + \mathcal{B}[w,\phi;t] = ( h, \phi)_{\Hg}.
\end{equation}
By  \eqref{estimate B2},  $(\p_t w, \phi)_{\Hg} $ is bounded by
$$
\dis ||h||_{(\Hg)^*} ||\phi||_{H^1_\mu} + C(1+|\kappa|)
||w||_{H^1_\mu}||\phi||_{H^1_\mu}.
$$
Hence
\begin{equation}\label{w_estimate5}
||\partial_t w||_{(\Hg)^*} \leq ||h||_{(\Hg)^*} + C(1+|\kappa|)
||w||_{H^1_\mu}.
\end{equation}
Next we set $\phi=w$ in (\ref{wp}) and use  \eqref{estimate B} to
have
\begin{align*}
\frac12 \frac{d}{dt}\|w\|^2_{L^2_\mu}  + \frac 14 \int |\nabla
w|^2\mu dm
& \leq ||h||_{(\Hg)^*}\|w\|_{H^1_\mu}   + C(1+|\kappa|^2) \|w\|^2_{L^2_\mu} \\
&  \leq 2  ||h||^2_{(\Hg)^*}+ \frac{1}{8}\|w\|^2_{H^1_\mu}+
C(1+|\kappa|^2) \|w\|^2_{L^2_\mu}.
\end{align*}
Hence
\begin{equation}\label{00}
\frac{d}{dt}\|w\|^2_{L^2_\mu} + \frac 14 \|w\|^2_{H^1_\mu}\leq
C(1+|\kappa|^2) \|w\|^2_{L^2_\mu}+4 ||h||^2_{(\Hg)^*},
\end{equation}
and therefore by Gronwall's inequality,
\begin{equation*}
\sup_t ||w(t,\cdot)||^2_{L^2_\mu} +\frac12 ||w||^2_{L^2_tH^1_\mu} \leq e^{C(T
+\int_0^T |\kappa|^2dt)} \left( ||w_0||^2_{L^2_\mu} +
||h||^2_{L^2_t(\Hg)^*}\right),
\end{equation*}
which  together with \eqref{w_estimate5} yields \eqref{w_estimate}.
\end{proof}

\begin{proof}[Proof of Theorem \ref{thm6}]
We construct a weak solution to \eqref{ww} using the Galerkin
approximation. Let $\{\phi_i\}$ be a basis of $\Hg$ and $L^2_\mu$.
Then an approximate solution $w_l$ in a finite dimensional space is
defined as $\dis w_l= \sum_{i=1}^l d^l_i(t)\phi_i$. Here $d^l_i(t)$
is a unique solution to a system of linear differential equations,
\begin{eqnarray*}
(\p_t w_l, \phi_j)_{\Hg} + \mathcal{B}[w_l,\phi_j;t] =
(h,\phi_j)_{\Hg}, \\
d_i^l(0)=((\phi_i, \phi_j)_{L^2_\mu}^{-1}(w_0, \vec
\phi)_{L^2_\mu})_i,
\end{eqnarray*}
where $\vec \phi=(\phi_1, \cdots, \phi_l)^\top$.
Using the same argument as that in the proof
of Lemma \ref{lem10}, we obtain estimates for $w_l$ such that
\begin{eqnarray*}
||w_l||^2_{L^2_tH^1_\mu} + ||\p_t w_l||^2_{L^2_t(\Hg)^*} \leq e^{F(|\kappa|)}
\left( ||w_0||^2_{L^2_\mu} + ||h||^2_{L^2_t(\Hg)^*}\right).
\end{eqnarray*}
Extracting a subsequence and passing to the limit give a weak
solution $w$ in $\H$. The uniqueness follows from the a priori estimate
\eqref{w_estimate}.
\end{proof}

To return to the Fokker-Planck problem (\ref{ff})  we will also need
the following

\begin{lem}\label{hq} Let  $h=-\partial_t q -\mu^{-1}L[q]$. If  $q\in C_t^1H^1_\mu$  and $\kappa\in L^2_t$ with
$\text{Tr}(\kappa)=0$, then
\begin{equation}\label{hh}
\|h\|^2_{L^2_t(\Hg)^*} \leq C\int_0^T(1+|\kappa|^2) \|q(t)\|^2d\tau.
\end{equation}
\end{lem}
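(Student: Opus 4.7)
The plan is to estimate $\|h\|_{(\Hg)^*}$ pointwise in $t$ and then integrate in time. By definition, for $\phi\in \Hg$ the duality pairing is identified with the $L^2_\mu$ inner product, so
\begin{equation*}
(h,\phi)_{\Hg}=-\int \partial_t q\,\phi\,\mu\,dm - \int \mu^{-1}L[q]\,\phi\,\mu\,dm
=-(\partial_t q,\phi)_{L^2_\mu}-\int L[q]\,\phi\,dm.
\end{equation*}
The first term is immediately controlled by $\|\partial_t q\|_{L^2_\mu}\|\phi\|_{L^2_\mu}\le \|\partial_t q\|_{H^1_\mu}\|\phi\|_{H^1_\mu}$.

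For the second term I would use the explicit form \eqref{lw} of $L[q]$ and integrate by parts. Since $\phi\in \Hg$ has vanishing trace and $\mu$ degenerates at $\partial B$, the boundary contributions from the two divergence terms disappear, and one recovers exactly the bilinear form:
\begin{equation*}
\int L[q]\,\phi\,dm=\tfrac12\int \nabla q\cdot\nabla\phi\,\mu\,dm-\int q\mu\,\kappa m\cdot\nabla\phi\,dm-\int K q\phi\,dm=\mathcal{B}[q,\phi;t].
\end{equation*}
This identification is the one slightly delicate step, since one must check that the integration-by-parts is legitimate for $q\in H^1_\mu$ and $\phi\in \Hg$; this is handled by the trace characterization of $\Hg$ used in Remark 2 and the density of $C_c^\infty$ in $\Hg$.

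Once this reduction is in hand, the continuity bound \eqref{estimate B2} from Lemma \ref{lem Energy} gives
\begin{equation*}
|\mathcal{B}[q,\phi;t]|\le C(1+|\kappa|)\|q\|_{H^1_\mu}\|\phi\|_{H^1_\mu},
\end{equation*}
so combining with the $\partial_t q$ estimate and taking the supremum over $\phi$ with $\|\phi\|_{H^1_\mu}=1$ yields
\begin{equation*}
\|h(t)\|_{(\Hg)^*}\le C(1+|\kappa(t)|)\bigl(\|q(t)\|_{H^1_\mu}+\|\partial_t q(t)\|_{H^1_\mu}\bigr)=C(1+|\kappa(t)|)\|q(t)\|.
\end{equation*}
Squaring, using $(1+|\kappa|)^2\le 2(1+|\kappa|^2)$, and integrating in $t\in(0,T)$ produces \eqref{hh}.

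The main obstacle is simply verifying that the integration-by-parts identity $\int L[q]\phi\,dm=\mathcal{B}[q,\phi;t]$ holds in the weighted setting: the weight $\mu$ vanishes on $\partial B$ at different rates depending on the three regimes $0<b<2$, $b=2$, $b>2$, and in the $b\ge 2$ case the term $K$ is unbounded near $\partial B$. For $K$ the pointwise bound $K\le C(1+|\kappa|)\sqrt{\mu\mu^*}$ used in the proof of Lemma \ref{lem Energy} together with Lemma \ref{lem1} ensures that $Kq\phi\in L^1$, so the bilinear form is finite and the identification is rigorous. Once this is in place, everything else is just Cauchy–Schwarz and the already-established continuity of $\mathcal B$.
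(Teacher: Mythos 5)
Your proof is correct and follows essentially the same route as the paper: split $h$ into $-\partial_t q$ and $-\mu^{-1}L[q]$, identify the pairing of the latter against $\phi$ with $\mathcal{B}[q,\phi;t]$ via integration by parts (justified first for $\phi\in C^\infty_c$ and then by density in $\Hg$), and invoke the continuity estimate \eqref{estimate B2} before taking the $L^2$ norm in $t$. Your extra discussion of why the $K$-term is integrable simply makes explicit what the paper delegates to Lemma \ref{lem Energy}.
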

\begin{proof}  For $q\in C_t^1H^1_\mu $,  it is obvious that $\p_t q \in L^2_t(\Hg)^*$
since $H^1_\mu \subset (H^1_\mu)^* \subset (\Hg)^*$.  In order to show $\mu^{-1}L[q]
\in L^2_t(\Hg)^*$, we use integration by parts and
\eqref{estimate B2} to get
\begin{eqnarray*}
\left|\int \mu^{-1}L[q] \phi \mu dm \right| = \left|
\mathcal{B}[q,\phi;t]\right| \leq C(1+|\kappa|) ||q(t,\cdot)||_{H^1_\mu}
||\phi||_{H^1_\mu}, \; \forall \phi \in C^\infty_c.
\end{eqnarray*}
Since $C^\infty_c$ is a dense subset of $\Hg$, for any $\phi \in \Hg$ with  $||\phi||_{H^1_\mu}=1$, we have
\begin{equation}
|(\mu^{-1}L[q],\phi)_{\Hg}| \leq C(1+|\kappa|)||q(t,\cdot)||_{H^1_\mu}.
\end{equation}
Taking the $L^2$ norm in $t$ leads to the desired estimate.
\end{proof}

Theorem \ref{thm6} and Lemma \ref{hq}  lead to the following  result
for problem (\ref{ff}) with general Dirichlet boundary condition.
\begin{thm}\label{prop1}
Suppose that $f_0\nu^{-1}\in L^2_\mu$,   $q\in C_t^1H^1_\mu$ and
$\kappa \in L^2_t$ with $\text{Tr}(\kappa)=0$. Then for any $T>0$ the
Fokker-Planck problem (\ref{ff}) has a unique solution $f$ such that
\begin{equation}\label{fw}
f=\nu(w+q) \quad {\rm with}\;  w\in \Ht \;\; {\rm for}\;\; 0<t  \leq T.
\end{equation}
Moreover,  for $F$ defined in (\ref{F}),
\begin{equation}\label{wth13}
\sup_t ||w(t,\cdot)||^2_{L^2_\mu} +\frac12 ||w||^2_{L^2_tH^1_\mu}
\leq e^{F(|\kappa|)} \left( ||w_0||^2_{L^2_\mu} + \int_0^T
(1+|\kappa(t)|^2) ||q(t)||^2dt \right).
\end{equation}
\end{thm}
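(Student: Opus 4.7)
The natural plan is to reduce the Fokker--Planck problem \eqref{ff} to the $w$-problem \eqref{ww} via the transformation $f=\nu(w+q)$ introduced in Section \ref{section3.1}, and then invoke the already-proven Theorem \ref{thm6} together with Lemma \ref{hq}. First I would record that because $q\in C^1_t H^1_\mu$, the initial datum $w_0=f_0\nu^{-1}-q(0,\cdot)$ lies in $L^2_\mu$, so all hypotheses on initial/source data required by Theorem \ref{thm6} can be checked.

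Next I would verify the hypotheses on the source $h=-\partial_t q-\mu^{-1}L[q]$. The first term is in $L^2_t(\Hg)^*$ since $C^1_t H^1_\mu \hookrightarrow C^1_t (\Hg)^*$, and the second term is controlled by Lemma \ref{hq}, which already gives
\[
\|h\|_{L^2_t(\Hg)^*}^2 \leq C\int_0^T (1+|\kappa|^2)\|q(t)\|^2 \,dt.
\]
With these inputs, Theorem \ref{thm6} produces a unique weak solution $w\in\H$ of \eqref{ww} satisfying
\[
\|w\|_\Ht^2 \leq e^{F(|\kappa|)}\bigl(\|w_0\|_{L^2_\mu}^2 + \|h\|_{L^2_t(\Hg)^*}^2\bigr).
\]
Combining this with the energy bound \eqref{w_estimate4} of Lemma \ref{lem10} and the $h$-estimate above yields \eqref{wth13}.

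To conclude, I would define $f:=\nu(w+q)$ and check that $f$ solves \eqref{ff}. Equation \eqref{LFPE1} is obtained by reversing the transformation in Section \ref{section3.1}: since $\mu\partial_t w + L[w]=\mu h$ and $L[w]=\mu\nu^{-1}\mathcal{L}[\nu w]$, while $h=-\partial_t q -\mu^{-1}L[q]$, adding the $q$-contribution recovers $\partial_t f + \mathcal{L}[f]=0$. The initial condition \eqref{LFPE2} holds because $w(0,\cdot)=f_0\nu^{-1}-q(0,\cdot)$ in $L^2_\mu$, hence $f(0,\cdot)=\nu(w(0,\cdot)+q(0,\cdot))=f_0$. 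The boundary condition \eqref{LFPE3} follows from the zero-trace property of $w\in\Hg$ together with the observation (used already in the remark after Definition \ref{defn1}) that $H^1_\mu$ functions admit an $L^1(\p B)$ trace, so $f\nu^{-1}|_{\p B}=(w+q)|_{\p B}=q|_{\p B}$.

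Uniqueness is the final step. If $f_1,f_2$ are two solutions of the form \eqref{fw} with the same data, then $w_1-w_2$ is a weak solution of \eqref{ww} with zero initial data and zero source, and Theorem \ref{thm6} (equivalently, the a priori estimate \eqref{w_estimate}) forces $w_1=w_2$, hence $f_1=f_2$. I expect no real obstacle here: the whole argument is a translation/assembly of the machinery built in this section, and the only subtle point is making sure that the weak formulation of the $w$-equation, when unwound by $f=\nu(w+q)$, genuinely corresponds to a (weak) solution of \eqref{ff} together with the prescribed boundary behavior, which is exactly what the trace remark guarantees.
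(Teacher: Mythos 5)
Your proposal follows essentially the same path as the paper: reduce \eqref{ff} to the $w$-problem \eqref{ww} via $f=\nu(w+q)$, invoke Theorem \ref{thm6} for existence/uniqueness of $w$, combine the a priori estimate \eqref{w_estimate4} from Lemma \ref{lem10} with the source estimate of Lemma \ref{hq} to obtain \eqref{wth13} (absorbing the extra polynomial factor into the exponential, i.e.\ replacing $Fe^F$ by $e^F$), and then undo the transformation to get $f$. The only small point worth flagging is in the uniqueness step: you assume the two candidate solutions $f_1,f_2$ share the \emph{same} $q$ and set $w=w_1-w_2$, whereas the paper deliberately works with $w=(f_2-f_1)\nu^{-1}$ and allows two different $q_1,q_2$ as long as $q_1|_{\partial B}=q_2|_{\partial B}$; this yields the slightly sharper (and conceptually relevant) statement that the solution depends only on the boundary trace of $q$, not on its extension into the interior. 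Both versions are correct reductions to the $h\equiv 0$, $w_0\equiv 0$ case of the $w$-problem; the paper's phrasing just makes the trace-dependence explicit, which matters for the sharpness discussion around Theorem \ref{thm1.5}.
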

\begin{proof}
The estimate (\ref{wth13}) follows from (\ref{w_estimate4}) and the estimate in Lemma \ref{hq}, with $Fe^F$ replaced by $e^F$.

We now prove uniqueness of $f$  in terms of $q|_{\partial B}$.
Let $f_i (i=1, 2)$ be two solutions with $q_i$ such that
$q_1|_{\partial B}=q_2|_{\partial B}$ and same initial data $f_0$.
Set $w=(f_2-f_1)\nu^{-1}$, then $w$ solves w-problem (\ref{ww})
with $w_0\equiv h\equiv 0$. Hence $w\equiv 0$, leading to
$f_1=f_2$.
\end{proof}

\begin{rmk}
As mentioned in Section \ref{sec2} that $\Hg = H^1_\mu$ if $b\geq
6$,i.e., the trace of $q\in H^1_\mu$ vanishes if $b\geq 6$. Thus,
the boundary condition \eqref{LFPE3} is nothing but a zero Dirichlet
boundary condition. In Section \ref{section6}, we show the
well-posedness with a nonzero Dirichlet boundary condition for
$b\geq 6$ using  yet a different transformation.
\end{rmk}

\subsection{Probability density function} So far we have discussed well-posedness of the
initial-boundary value problem (\ref{ff}) for  $b>0$ and any given
$q$. We now turn to the question of which $q$ corresponds to the
probability density, i.e., non-negative solution with  constant mass for all time.

\begin{prop}\label{prop8}Let  $f(t,m)$ be the solution to  problem \eqref{ff}  obtained in Theorem \ref{prop1}.  If $f_0\geq
0$ and $q(t,m)|_{\p B} \geq 0$ almost everywhere,  then $f$ remains nonnegative for $t>0$.
\end{prop}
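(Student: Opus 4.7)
The plan is to exploit the transformation $f = \nu(w+q)$ from Theorem \ref{prop1} and establish a weak maximum principle for $u := w+q$ via an energy estimate on its negative part. Since $\nu > 0$ in the interior of $B$, it will suffice to prove $u \geq 0$ a.e., for then $f = \nu u \geq 0$.

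First I would derive a weak homogeneous equation for $u$. Combining the weak formulation \eqref{4.9} with $h = -\partial_t q - \mu^{-1}L[q]$, the function $u \in H^1_\mu$ satisfies
$$(\partial_t u, \phi)_{\Hg} + \mathcal{B}[u, \phi; t] = 0 \qquad \text{for every } \phi \in \Hg \text{ and a.e. } t \in (0, T),$$
with initial datum $u(0, \cdot) = f_0/\nu \geq 0$. Since $w$ has zero trace and $q$ has trace $q|_{\partial B} \geq 0$, one has $u|_{\partial B} = q|_{\partial B} \geq 0$ a.e.

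Next, I would introduce $u^- := \max\{-u, 0\}$ and take $-u^-$ as a test function. The trace of $u^-$ equals $\max\{-q|_{\partial B}, 0\} = 0$, so $u^-(t, \cdot) \in \Hg$ for a.e. $t$, the trace map on $H^1_\mu$ being bounded by the estimate in the remark following Lemma \ref{lem1}. Splitting the integrals over $\{u > 0\}$ (where $u^- = 0$ and $\nabla u^- = 0$) and $\{u < 0\}$ (where $u = -u^-$ and $\nabla u = -\nabla u^-$), bilinearity of $\mathcal{B}$ yields
$$\mathcal{B}[u, -u^-; t] = \mathcal{B}[u^-, u^-; t],$$
while the Stampacchia-type chain rule gives $(\partial_t u, -u^-)_{\Hg} = \tfrac12 \tfrac{d}{dt}\|u^-\|^2_{L^2_\mu}$. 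Combined with the coercivity estimate \eqref{estimate B} of Lemma \ref{lem Energy}, this leads to
$$\frac{d}{dt}\|u^-(t)\|^2_{L^2_\mu} + \tfrac12 \|\nabla u^-(t)\|^2_{L^2_\mu} \leq C\bigl(1 + |\kappa(t)|^2\bigr)\|u^-(t)\|^2_{L^2_\mu}.$$
Since $u^-(0, \cdot) = 0$ by $f_0 \geq 0$, Gronwall's inequality forces $\|u^-(t)\|_{L^2_\mu} \equiv 0$, so $u \geq 0$ and $f = \nu u \geq 0$.

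The main obstacle is rigorously justifying the Stampacchia-type chain rule and the admissibility of $-u^-$ as a test function in the $\mu$-weighted framework. I would approximate by smooth truncations $\beta_\varepsilon(-u)$ of the ramp function (with $\beta_\varepsilon(s) \to s^+$ and $\beta_\varepsilon'(s) \to \chi_{\{s>0\}}$), test the weak formulation against $-\beta_\varepsilon(-u)$, and pass to the limit as $\varepsilon \to 0^+$. The singular $K$-term for $b \geq 2$ is absorbed through the weighted Sobolev inequality \eqref{2.27} already invoked in the proof of Lemma \ref{lem Energy}, so no new analytic ingredient is needed beyond what has already been developed in Section 3.
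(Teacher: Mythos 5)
Your proposal is correct and follows essentially the same route as the paper's proof: both work with $f\nu^{-1}$ (your $u$, the paper's $w$ in that proof), observe that $q|_{\partial B}\ge 0$ forces the negative part to have zero trace and hence lie in $\Hg$, test the homogeneous weak formulation against that negative part so the cross terms drop by disjoint supports, and close with coercivity of $\mathcal{B}$ plus Gronwall from vanishing initial data. Your extra remarks about Stampacchia-type truncations just fill in a standard technicality that the paper leaves implicit.
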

\begin{proof}
We adapt an idea  from  \cite{Ch09}. Let
$f^{\pm}$ be the positive and negative parts of the solution $f$
such that $f=f^+-f^-$.  Obviously, $w^\pm:=f^\pm\nu^{-1}\in H^1_\mu$
and $q|_{\p B}\geq 0$. This implies that the trace of $w^-$ at the
boundary vanishes, so
\begin{equation*}
w^-\in \Hg.
\end{equation*}
From  the equation
\begin{equation*}
\p_t w\mu + L[w]=0,
\end{equation*}
which is transformed from \eqref{LFPE1} it follows that
$$
(\partial_t w, w^-)_{\Hg}+B[w, w^-;t]=0.
$$
Since $(\partial_t w^+, w^-)_{\Hg}$ and $\int L[w^+]w^- dm $ vanish,
hence
\begin{equation*}
\frac12 \frac{d}{dt}  \left(\int |w^-|^2\mu dm \right) + \mathcal{B}[w^-,w^-;t] =0.
\end{equation*}
The coercivity of $\mathcal{B}$, \eqref{estimate B}, gives
\begin{equation*}
\frac12 \frac{d}{dt}   \left(\int |w^-|^2\mu dm \right) + \frac14 \int |\nabla w^-|^2 \mu
dm \leq C(1+|\kappa|^2) \int |w^-|^2 \mu dm.
\end{equation*}
Hence
\begin{equation*}
\sup_t ||w^-(t,\cdot)||^2_{L^2_\mu} \leq ||w^-_0||^2_{L^2_\mu} e^{F(|\kappa|)}
\end{equation*}
for $T>0$. Since $w^-_0=0$, $||w^-(t,\cdot)||^2_{L^2_\mu}=0$ for
all $0\leq t\leq T$.
\end{proof}

\begin{prop}\label{prop9} Let $f$ be a solution to the Fokker-Planck problem \eqref{ff} obtained
in Theorem \ref{prop1}. Suppose $b\geq 2$ and  $q(t,m)|_{\p B} \geq 0$.   If $q|_{\p B}=0$ for all $t\in [0,T]$, then
\begin{equation*}
\int\! f(t,\cdot) dm= \int \! f_0 dm, \quad t\in [0,T],
\end{equation*}
and vice versa.
\end{prop}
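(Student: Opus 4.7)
The plan is to integrate the Fokker--Planck equation \eqref{LFPE1} in $m$ over $B$ and reduce the resulting boundary flux to an explicit multiple of $\int_{\partial B} q\,dS$. Rewriting the FPE in conservation form $\partial_t f + \nabla_m\cdot J = 0$ with flux $J = \kappa m f - \frac{bm}{2\rho}f - \tfrac{1}{2}\nabla_m f$, I would integrate over the shrunken ball $B_\delta = \{|m|^2 < b-\delta\}$ and pass $\delta\to 0^+$. The left side converges to $\frac{d}{dt}\int_B f\,dm$ since $f = \nu(w+q) \in L^1(B)$ by integrability of $\nu$ on $B$ and $w+q\in L^2_\mu$, producing the identity
\begin{equation*}
\frac{d}{dt}\int_B f\,dm = -\lim_{\delta\to 0^+}\int_{\partial B_\delta} J\cdot n\,dS,
\end{equation*}
at least along a suitable sequence $\delta\to 0$.

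Next I would substitute $f = \nu(w+q)$ to write
\begin{equation*}
J = \nu\,\kappa m\,(w+q) \;-\; \Bigl[\tfrac{bm}{2\rho}\,\nu + \tfrac{1}{2}\nabla_m\nu\Bigr](w+q) \;-\; \tfrac{\nu}{2}\nabla_m(w+q).
\end{equation*}
Since $\nu|_{\partial B}=0$ for $b\geq 2$, the first and third terms contribute nothing in the trace limit. The crucial observation is that the bracketed coefficient is \emph{bounded} up to $\partial B$ even though $\tfrac{bm}{2\rho}\nu$ and $\tfrac{1}{2}\nabla_m\nu$ each blow up. Direct computation gives $\tfrac{bm}{2\rho}\nu + \tfrac{1}{2}\nabla_m\nu = (b/2-1)\,m$ for $b>2$, and $=m$ for $b=2$ (with the $\ln(e/\rho)$ singularities canceling exactly). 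Dotting with $n = m/\sqrt{b}$ and using $w|_{\partial B}=0$, which holds since $w(t,\cdot)\in \Hg$, yields $J\cdot n|_{\partial B} = -c_b\,q|_{\partial B}$ with $c_b>0$.

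Combining the two steps produces $\frac{d}{dt}\int_B f(t,\cdot)\,dm = c_b\int_{\partial B} q(t,\cdot)\,dS$. Thus if $q|_{\partial B}\equiv 0$ on $[0,T]$, mass is conserved. Conversely, mass conservation forces $\int_{\partial B}q(t,\cdot)\,dS=0$ for a.e.\ $t\in[0,T]$, and combined with the sign assumption $q|_{\partial B}\geq 0$ this gives $q|_{\partial B}=0$ as desired.

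The main obstacle is justifying that the $\tfrac{\nu}{2}\nabla_m(w+q)\cdot n$ contribution vanishes in the trace limit, since $\nabla_m(w+q)$ is only $L^2_\mu$ and may blow up near $\partial B$. This should be handled by integrating the boundary term over $\delta\in(0,\delta_0)$ in coarea fashion, applying Cauchy--Schwarz with the weight $\nu^2/\mu$ --- which is integrable near $\partial B$ (namely $\nu^2/\mu = \rho^{b/2}$ for $b>2$ and $\rho$ for $b=2$) --- against $|\nabla_m(w+q)|^2\mu$, and then extracting a subsequence $\delta_n\to 0^+$ along which the boundary integral tends to zero. The remaining trace identifications rely on the $H^1_\mu$ trace inequality noted in Remark~2.
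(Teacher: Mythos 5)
Your argument is essentially the same as the paper's, though expressed slightly differently. The paper integrates $\partial_t f$ against a smooth cutoff $\phi_\varepsilon$ that converges to $\chi_B$, while you integrate over shrinking balls $B_\delta$ and pass to the boundary; these are the same co-area averaging device. The paper exhibits the key cancellation by rewriting the drift--diffusion part of the FPE as $\tfrac12\nabla\cdot(\rho^{b/2}\nabla(f/\rho^{b/2}))$, so that the non-vanishing boundary contribution comes from $\rho^{b/2}\nabla\phi_\ep\cdot\nabla(\nu\rho^{-b/2})$, which is bounded. You make the very same cancellation visible algebraically by showing that the coefficient $\tfrac{bm}{2\rho}\nu + \tfrac12\nabla_m\nu$ is bounded up to $\partial B$ (equal to $(b/2-1)m$ for $b>2$ and $m$ for $b=2$). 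Both treat the singular gradient term identically: Cauchy--Schwarz with the integrable weight $\nu^2/\mu=\rho^{b/2}$ against $|\nabla_m w|^2\mu$, plus a mean-value / subsequence selection, exactly as you outline in your last paragraph. The one difference is that the paper carries out the computation for $w=f\nu^{-1}$ (so that the boundary integrand is directly $q|_{\partial B}$), whereas you split $f=\nu(w+q)$; both routes lead to $\frac{d}{dt}\int_B f\,dm = c_b\int_{\partial B}q\,dS$ with $c_b\neq 0$, which combined with the sign condition $q|_{\partial B}\geq 0$ gives the equivalence. (Incidentally, your constant $c_b$ differs from the paper's displayed $-C_0\sqrt{b}$ by a factor of $2$; the paper drops a $\tfrac12$ when writing $\partial_t f = -\nabla\cdot(\kappa m f) + \nabla\cdot(\rho^{b/2}\nabla(f/\rho^{b/2}))$, and your constant is the correct one --- this does not affect the conclusion since only $c_b\neq 0$ matters.)
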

\begin{proof}
It suffices to prove the claim for smooth enough $f$ since  the
general case can be treated by an approximation as in \cite{LiSh08}.
We rewrite \eqref{LFPE1} as
\begin{equation*}
\p_tf = - \nabla \cdot(\kappa m f) + \nabla \cdot \left(\rho^{b/2}
\nabla\frac{f}{\rho^{b/2}}\right).
\end{equation*}
First, we take a test function $ \phi_\ep(m)=\phi_\varepsilon
(|m|)\in C^\infty_c(\R^N)$ converging to $\chi_B$ as $\varepsilon
\to0$ such that
\begin{equation*}
\phi_\varepsilon(|m|)=\left\{
       \begin{array}{ll}
         1, & |m| \leq \sqrt b- \varepsilon \\
         0, & |m| \geq \sqrt b-  \varepsilon/2
       \end{array}
     \right., \quad |\nabla \phi_\ep| \leq C\frac1\ep
\end{equation*}
and for any smooth $g$
\begin{equation}\label{cutoff}
\int_{\sqrt{b}-\epsilon}^{\sqrt{b}-\epsilon/2}  g(r) \phi'_\ep(r)dr
\to - g(\sqrt b) \quad \text{as~} \ep\to 0,
\end{equation}
where $\dis \phi'_\ep(r)=\nabla \phi_\ep \cdot \frac{m}{|m|}$.

One can construct such a $\phi_\ep$ by mollifiers, for example
\begin{equation*}
\phi_{\ep}(m)=\int_{B_{\sqrt{b}-3\ep/4}}\eta_{\ep/4}(m-m') dm'
\end{equation*}
where
\begin{equation*}
\eta_{\ep}(m)=\frac{1}{\ep^N}\eta(m/\ep),\quad \eta(m)=\left\{
                                                         \begin{array}{ll}
                                                           Ce^{-\frac{1}{1-|m|^2}}, & |m|<1 \\
                                                           0, &
|m|\geq1
                                                         \end{array}
                                                       \right.,
\end{equation*}
and $C$ is the normalizing constant.

Since $ \nabla \phi_\varepsilon$ is supported in $B^\varepsilon: =
B_{\sqrt b-\varepsilon/2}\setminus B_{\sqrt b-\varepsilon}$, hence
\begin{equation}\label{dtf}
\frac{d}{dt} \int_B f \phi_\varepsilon dm = \int _{B^\varepsilon}
f\kappa m \cdot \nabla \phi_\varepsilon dm - \int_{B^\ep} \rho^{b/2}
\nabla \left(\frac{f}{\rho^{b/2}}\right) \cdot \nabla \phi_\ep dm.
\end{equation}
By  $w=f\nu^{-1}$, the right hand side reduces to
\begin{equation}\label{5.3}
 \int_{B^\ep}(w\kappa m-  \nabla w)\cdot \nabla \phi_\ep \nu dm - \int_{B^\ep} w  \rho^{b/2}
\nabla \phi_\ep \cdot \nabla (\nu \rho^{-b/2}) dm.
\end{equation}
The first term converges to $0$. Indeed,
\begin{eqnarray*}
\left| \int_{B^\ep} (w\kappa m-  \nabla w) \cdot \nabla \phi_\ep \nu
dm \right| &\leq & \left(\int_{B^\ep} |w\kappa m-  \nabla w|^2 \mu
dm \right)^{1/2}\left(\int_{B^\ep} |\nabla
\phi_\ep|^2\frac{\nu^2}{\mu}dm\right)^{1/2}.
\end{eqnarray*}
Since ${\nu^2}/{\mu}=\rho^{b/2}$ for $b\geq 2$, by mean value theorem  there exists  $r \in
(\sqrt{b}-\varepsilon, \sqrt{b}-\varepsilon/2)$ such that
\begin{equation*}
\int_{B^\ep} |\nabla \phi_\ep|^2 \frac{\nu^2}{\mu} dm =
\frac{\varepsilon}{2}\int_{\partial B_r}|\nabla\phi_\ep|^2
\rho^{b/2}dS  \leq C \ep^{b/2-1},
\end{equation*}
which is uniformly bounded for $b\geq 2$.  Using $w\in H^1_\mu$, we obtain  $\int_{B^\ep} |w\kappa m-  \nabla w|^2 \mu dm \to 0$
as $\ep \to 0$.  Hence the first term  in (\ref{5.3}) converges to $0$.

On the other hand, for $\dis C_0=\left\{
           \begin{array}{ll}
             -2, & b=2 \\
             2-b, & b>2
           \end{array}
         \right.
$
\begin{eqnarray*}
-\int_{B^\ep} w  \rho^{b/2}  \nabla \phi_\ep \cdot \nabla (\nu
\rho^{-b/2}) dm &=& C_0 \int_{B^\ep} w \nabla \phi_\ep \cdot m dm \\
&=&  C_0 \int_{\sqrt{b}-\epsilon}^{\sqrt{b}-\epsilon/2}  \int_{\p B_r} w r \phi'_\ep(r) dS dr\\
&=& C_0 \int_{\sqrt{b}-\epsilon}^{\sqrt{b}-\epsilon/2}  \left( r
\int_{\p B_{r}} w dS \right)\phi'_\ep(r) dr.
\end{eqnarray*}
Due to \eqref{cutoff} this converges to
\begin{equation*}
-C_0 \sqrt b \int_{\p B} w dS = -C_0 \sqrt b \int_{\p B} q dS.
\end{equation*}
Since $C_0\neq 0$, this shows that $\dis \frac{d}{dt}\int_B f dm=0$
if and only if $\dis\int_{\p B} q dS =0 $, or $q|_{\p B}=0$.
\end{proof}
\begin{rmk}
In Proposition \ref{prop9}, the assumption $b\geq 2$ is sharp. In
the case $b<2$, we need to consider nontrivial $q\not=0$ since the
equilibrium profile $f_{eq}=\rho^{b/2}$ satisfies
\begin{equation*}
q\big|_{\p B}=\rho^{b/2} \nu^{-1}|_{\p B} = 1.
\end{equation*}
This requirement is also consistent with \cite{LiLi08}, in which it
was shown that when $b<2$, $f\nu^{-1}|_{\p B}=q|_{\p B}$ is
necessarily prescribed and each solution depends on the choice of
$q$. It would be interesting to figure out  a particular $q$  for
which the corresponding solution when $b<2$ is a probability
density.
\end{rmk}

\section{The Fokker-Planck equation}\label{section4}
In this section, we show the well-posedness of the FPE \eqref{FPE} including $x$
variable. The result is stated as follows.
\begin{thm}\label{thm3}
Suppose that for $b>0$ and any integer $s>N/2+1$, $\nabla \cdot
v=0$ and
\begin{equation}\label{regularity}
v \in C_tH_x^s \cap L^2_t H^{s+1}_x, \quad f_0\nu^{-1} \in H^s_x
L^2_\mu,\quad q\in C^1_t H^{s+1}_x H^1_\mu, \; 0<t<T
\end{equation}
for any $T>0$. Then \eqref{FPE} has a unique solution $f=\nu(w+q)$ satisfying
\begin{eqnarray} \label{estimate f}
\dis\sup_t |w|^2_{0,s} + \frac12 \int_0^T |\nabla_m w|^2_{0,s}dt
\leq e^{F(|v|_{s+1})} \left(|w_0|^2_{0,s} +  \|q\|_{1, 1, s+1}^2
\right),
\end{eqnarray}
where  $F$ was defined in (\ref{F}).
\end{thm}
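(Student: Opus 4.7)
The plan is to transform \eqref{FPE} to a $w$-problem with zero Dirichlet boundary via $f=\nu(w+q)$, differentiate in $x$, and run the energy machinery of Section~3 uniformly in $x$ before integrating. Writing $\kappa(t,x):=\nabla v(t,x)$ (still traceless since $\nabla\!\cdot\! v=0$), the transformed equation reads
\begin{equation*}
\mu(\partial_t+v\cdot\nabla_x)w+L[w]=\mu h,\qquad w|_{\partial B}=0,\qquad w(0)=f_0\nu^{-1}-q(0),
\end{equation*}
with $h=-\partial_t q-(v\cdot\nabla_x)q-\mu^{-1}L[q]$, and with the bilinear form $\mathcal{B}[\cdot,\cdot;t,x]$ from \eqref{bb} now carrying $x$ as an inert parameter through $\kappa(t,x)$. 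I would first apply $\partial^\alpha_x$ for $|\alpha|\le s$ to obtain
\begin{equation*}
\mu(\partial_t+v\cdot\nabla_x)\partial^\alpha w+L[\partial^\alpha w]=\mu\partial^\alpha h-\mu\,[\partial^\alpha,v\cdot\nabla_x]w-\bigl(\partial^\alpha L[w]-L[\partial^\alpha w]\bigr),
\end{equation*}
where the last two groups are commutator terms in which every $x$-derivative hitting $v$ (or, inside $L$, hitting $\kappa$ and $K$) is paid for by Sobolev embedding $H^{s}_x\hookrightarrow L^\infty_x$, which is available since $s>N/2+1$.

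Next I would pair this identity with $\partial^\alpha w$ in $L^2_\mu$, use the coercivity bound \eqref{estimate B} for the principal $L$-part pointwise in $(t,x)$, and integrate in $x$. The incompressibility of $v$ kills the transport term after integration in $x$, since $\int v\cdot\nabla_x(\partial^\alpha w)^2\,dx=0$. For the commutators I would use Moser-type estimates in $x$: each term is controlled by
\begin{equation*}
C(1+|v|_{s+1})\,|w|_{1,s}\,|w|_{0,s}+\text{(source terms involving }q,\partial_t q\text{)},
\end{equation*}
where the $|v|_{s+1}$ factor accounts for one extra $x$-derivative landing on $\nabla v$ inside either the transport coefficient or the $K$-potential in $L$. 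The $h$-contribution is bounded, as in Lemma~\ref{hq}, by $C(1+|v|_{s+1})\|q\|_{1,1,s+1}$ after distributing $x$-derivatives between $q$ and $v$ (note that $\partial_t q$ appears inside $h$, which is why $\|q\|_{1,1,s+1}$ contains a $\partial_t$-norm). Combining these with \eqref{estimate B} and absorbing $\tfrac{1}{4}|\nabla_m\partial^\alpha w|^2_{L^2_\mu}$ gives, after summing over $|\alpha|\le s$,
\begin{equation*}
\tfrac{d}{dt}|w|^2_{0,s}+\tfrac{1}{4}|\nabla_m w|^2_{0,s}\le C(1+|v|^2_{s+1})|w|^2_{0,s}+C(1+|v|^2_{s+1})\|q\|^2_{1,1,s+1}.
\end{equation*}
Gronwall's inequality with the weight $\exp\!\int_0^T C(1+|v|^2_{s+1})dt$ then yields \eqref{estimate f}.

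For existence I would build approximate solutions by Galerkin truncation in $m$ using a basis $\{\phi_i\}\subset\Hg\cap L^2_\mu$ as in the proof of Theorem~\ref{thm6}, but now with coefficients $d^l_i(t,x)$ depending on $x$; alternatively, one can mollify $v$ in $x$ and solve the resulting non-singular-in-$x$ parabolic problem by Theorem~\ref{prop1} applied pointwise in $x$, then splice $x$-regularity by running the above a priori estimate for the approximants uniformly. Weak compactness of $\partial^\alpha w^l$ in $L^2_tH^1_\mu$ and of $\partial_t\partial^\alpha w^l$ in $L^2_t(\Hg)^*$, together with a standard Aubin--Lions argument in $x$, lets one pass to the limit and recover $w\in C_tH^s_xL^2_\mu\cap L^2_tH^s_xH^1_\mu$. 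Uniqueness follows by subtracting two solutions and applying the same energy estimate to the difference, which satisfies a homogeneous problem with $w_0=0$ and $h=0$, so $w\equiv 0$ by \eqref{estimate f}.

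The main obstacle is the commutator $[\partial^\alpha,L]$: the singular potential $K$ in \eqref{K} behaves like $\rho^{1-b/2}$ (or logarithmically at $b=2$), so naively distributing $x$-derivatives across $K\partial^\beta w\cdot\partial^\gamma w$ threatens integrability near $\partial B$. The cure is exactly the conjugate-weight Sobolev inequality \eqref{2.27}: since $K^2/\mu\lesssim(1+|\kappa|^2)\mu^*/\mu$-type bounds are avoided if one keeps $K$ paired with two $w$-factors and invokes $\int w^2\mu^*dm\le C\|w\|^2_{H^1_\mu}$, every commutator term involving $K$ is dominated by $C(1+|v|_{s+1})\|w\|_{H^1_\mu}\|w\|_{H^1_\mu}$ at the level of each $(t,x)$, so the $|\nabla_m w|^2_{0,s}$ coming from the principal part still absorbs it after integration in $x$. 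Once this is handled, the rest is a clean parabolic energy argument and Gronwall.
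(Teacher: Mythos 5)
Your a priori estimate is essentially the paper's own Step 2: differentiate the $w$-equation in $x$, pair with $\partial^\alpha w$ in $L^2_\mu$, kill the transport term $\int v\cdot\nabla_x(\partial^\alpha w)^2 dx$ by incompressibility, distribute $x$-derivatives with the Moser/Sobolev calculus inequalities (\ref{sobolev calculus})--(\ref{sobolev calculus-2}), control the singular potential $K$ via the rewriting $K\sim(c_1+c_2\kappa m\cdot m)\sqrt{\mu\mu^*}$ and the embedding (\ref{2.27}), absorb $|\nabla_m w|^2_{0,s}$, and Gronwall. That all matches.

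The gap is the existence construction. The paper's Step 1 is a Lagrangian reformulation: it introduces the flow map $\p_t x(t,y)=v(t,x(t,y))$ and passes to $\tilde f(t,y,m)=f(t,x(t,y),m)$, so the transport $v\cdot\nabla_x$ disappears and (\ref{tildef}) is a genuinely $y$-parametrized instance of (\ref{ff}). Only then does Theorem~\ref{prop1} apply, once for each $y$; the $y$-regularity up to order $s$ is then built by an induction with difference quotients $\delta^\gamma$, with the commutator $J=\tilde\kappa m\,\delta^\gamma\tilde f-\delta^\gamma(\tilde\kappa m\tilde f)$ treated as an extra source in $L^2_t(\Hg)^*$. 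Your proposal never removes the $x$-transport before invoking the $m$-only machinery: ``mollify $v$ and apply Theorem~\ref{prop1} pointwise in $x$'' does not type-check, because Theorem~\ref{prop1} governs a problem with no $x$-transport and mollifying $v$ does not eliminate $v\cdot\nabla_x$. The Galerkin-in-$m$ route with $x$-dependent coefficients $d^l_i(t,x)$ is more promising, but the projected system
\begin{equation*}
M(\p_t+v\cdot\nabla_x)d^l+B(t,x)d^l=H(t,x)
\end{equation*}
is itself a linear first-order transport system in $(t,x)$, and solving it already requires the method of characteristics, i.e., precisely the flow map you omitted; one also then has to convert $y$-regularity back to $x$-regularity through the inverse flow. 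So while your a priori estimate and uniqueness argument are fine, the existence proof is incomplete: you need to either state and use the Lagrangian reduction as the paper does, or fully carry out the transport system analysis in the Galerkin scheme, which amounts to the same idea.
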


The proof of Theorem \ref{thm3} consists of two parts:
first we show the existence of the solution $f$ to problem \eqref{FPE} by using the flow map, followed by proving
regularity in $x$ inductively such that $ w \in C_tH^s_xL^2_\mu \cap L^2_t H^s_x H^1_\mu$ with $v, f_0$
and $q$ given in (\ref{regularity}). 
In the second step, we derive estimate (\ref{estimate f}) directly from (\ref{FPE})
to control $f$ in terms of the given data. The uniqueness can be obtained from the estimation \eqref{estimate f}
as performed in the proof of Theorem \ref{prop1}.



First, we state a technical lemma.
\begin{lem}\label{lem17}
Suppose that $\psi\in H^1_\mu$ and $\phi \in \Hg$. Then for the
trace map $\dis \mathcal{T}: W^{{1,1}}(B)\to L^1(\p B)$
\begin{equation}\label{3.26}
\mathcal{T}(\psi\phi\mu)=0.
\end{equation}
\end{lem}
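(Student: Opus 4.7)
The plan is to use the density of $C_c^\infty(B)$ in $\Hg$ together with continuity of the standard trace map on $W^{1,1}(B)$. The key technical point is that although neither $\psi$ nor $\phi$ alone need lie in $W^{1,1}(B)$, the weighted product $\psi\phi\mu$ does, and moreover the $\Hg$-approximation of $\phi$ by compactly supported smooth functions passes to $\psi\phi\mu$ in the $W^{1,1}$-norm.

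First I would verify that $\psi\phi\mu\in W^{1,1}(B)$. Write $\psi\phi\mu=(\psi\sqrt\mu)(\phi\sqrt\mu)$ and apply Cauchy--Schwarz, using $\psi,\phi\in L^2_\mu$, to see $\psi\phi\mu\in L^1(B)$. For the weak gradient, expand
\[
\nabla(\psi\phi\mu)=\phi\mu\nabla\psi+\psi\mu\nabla\phi+\psi\phi\nabla\mu .
\]
The first two terms are controlled in $L^1$ by the same Cauchy--Schwarz argument. For the third, a direct calculation with $\mu$ as in \eqref{mu} gives the pointwise estimate $|\nabla\mu|\le C\sqrt{\mu\mu^*}$ (for $0<b<2$ it reads $|\nabla\mu|\lesssim\rho^{b/2-1}$, for $b=2$ it reads $|\nabla\mu|\lesssim \we$, and for $b>2$ it reads $|\nabla\mu|\lesssim\rho^{1-b/2}$; each is precisely $\sqrt{\mu\mu^*}$ up to constants). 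Hence
\[
\int_B|\psi\phi\nabla\mu|\,dm\le C\int_B(|\psi|\sqrt\mu)(|\phi|\sqrt{\mu^*})\,dm\le C\,\|\psi\|_{L^2_\mu}\|\phi\|_{L^2_{\mu^*}},
\]
and by Lemma \ref{lem1}, $\|\phi\|_{L^2_{\mu^*}}\le C\|\phi\|_{H^1_\mu}$ since $\phi\in\Hg$.

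Next, because $\phi\in\Hg$, choose $\phi_n\in C_c^\infty(B)$ with $\phi_n\to\phi$ in $H^1_\mu$. Each product $\psi\phi_n\mu$ has compact support in $B$, so it lies in $W^{1,1}(B)$ with vanishing trace. Repeating the three estimates above with $\phi$ replaced by $\phi-\phi_n$ yields
\[
\|\psi\phi\mu-\psi\phi_n\mu\|_{W^{1,1}(B)}\le C\bigl(\|\psi\|_{L^2_\mu}+\|\nabla\psi\|_{L^2_\mu}\bigr)\|\phi-\phi_n\|_{H^1_\mu}\longrightarrow 0,
\]
where again the nontrivial piece, the one with $\nabla\mu$, is absorbed via $|\nabla\mu|\le C\sqrt{\mu\mu^*}$ and the embedding $\Hg\hookrightarrow L^2_{\mu^*}$ applied to the difference $\phi-\phi_n\in\Hg$.

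Finally, continuity of the trace operator $\mathcal{T}\colon W^{1,1}(B)\to L^1(\p B)$ forces $\mathcal{T}(\psi\phi\mu)=\lim_n\mathcal{T}(\psi\phi_n\mu)=0$, proving the lemma. The only step that requires care is the handling of the $\psi\phi\nabla\mu$ term; once the pointwise bound $|\nabla\mu|^2\le C\mu\mu^*$ is in hand, the combination of Lemma \ref{lem1} with the standard Cauchy--Schwarz inequality closes the argument uniformly in $b>0$.
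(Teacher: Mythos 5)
Your overall strategy — show $\psi\phi\mu\in W^{1,1}(B)$ with a bound by $\|\phi\|_{H^1_\mu}$, approximate $\phi$ by $\phi_n\in C^\infty_c$, and use continuity of the trace — is exactly the paper's, and the handling of the terms $\phi\mu\nabla\psi$, $\psi\mu\nabla\phi$ and (for $b\neq2$) $\psi\phi\nabla\mu$ is sound. But there is a genuine error in the case $b=2$. You assert that $|\nabla\mu|\le C\sqrt{\mu\mu^*}$ holds uniformly in $b>0$ and that for $b=2$ this reads $|\nabla\mu|\lesssim\we$. That is false: for $b=2$ one has $\mu=\rho\wet$, so $\mu'(\rho)=\wet-2\we$, hence $|\nabla\mu|\sim\wet$ near $\partial B$, whereas $\sqrt{\mu\mu^*}=\sqrt{\rho\wet\cdot\rho^{-1}}=\we$. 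Since $\wet/\we=\we\to\infty$ as $\rho\to0$, the pointwise bound you rely on does not hold, and the Cauchy–Schwarz step $\int|\psi\phi\nabla\mu|\le C\|\psi\|_{L^2_\mu}\|\phi\|_{L^2_{\mu^*}}$ breaks down for $b=2$.

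The paper fills this gap by splitting $|\nabla\mu|\le C(\wet+\sqrt{\mu\mu^*})$ for $b=2$ and treating the $\wet$ piece separately: it applies the second embedding \eqref{2.27-1} to $\psi\in H^1_\mu$ to get $\psi\in L^2_{\rho^{-1+\delta}}$, then uses
\[
\left|\int\psi\phi\,\wet\,dm\right|\le\left(\int\psi^2\rho^{-1+\delta}dm\right)^{1/2}\left(\int\phi^2\rho^{1-\delta}\ln^4\tfrac{e}{\rho}\,dm\right)^{1/2},
\]
and the second factor is controlled by $\|\phi\|_{H^1_\mu}$ because $\rho^{1-\delta}\ln^4(e/\rho)\le C\rho^{-1}=C\mu^*$. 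Incorporating this extra step for $b=2$ would close your argument; as written, it is incomplete.
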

\begin{proof}
Since $C^\infty_c$ is a dense subset of $\Hg$, it suffices to show that
for a fixed $\psi \in H^1_\mu$ and any $\phi\in C^\infty_c$
\begin{equation}\label{note1}
||\psi\phi\mu||_{W^{1,1}} \leq C ||\phi||_{H^1_\mu}.
\end{equation}
Then, the standard trace
theorem asserts that $\mathcal{T}(\psi\phi\mu)$ is well-defined in $L^1(\p B)$ and it vanishes,    also  $\mathcal{T}$  is a continuous map with respect to $\phi$,
we can thus conclude \eqref{3.26} for any $\phi \in \Hg$ by passing to the limit of sequence $\phi_n \in C^\infty_c$ such that $\phi_n \to \phi$.

\eqref{note1} is indeed the case. It is obvious that
$\psi\phi\mu, \nm \psi \phi\mu$ and  $ \psi \nm\phi \mu$ are integrable.
For $b\neq 2$, $|\nm \mu| \leq C \sqrt{\mu\mu^*}$
and \eqref{2.27} yield
\begin{equation*}
\int |\psi \phi \nm \mu dm| \leq C
\|\psi\|_{L^2_\mu}||\phi||_{H^1_\mu}.
\end{equation*}
For $b=2$,
$$
\dis |\nm \mu| \leq C (\wet + \we) \leq C(\wet +
\sqrt{\mu\mu^*}) .
$$
Using \eqref{2.27-1} and $\psi \in H^1_\mu$, we obtain $\psi\in
L^2_{-1+\delta}$ for any $\delta>0$. Hence
\begin{eqnarray*}
\left|\int \psi \phi \wet dm \right| \leq C \left(\sqrt{\int \psi^2
\rho^{-1+\delta}dm} \sqrt{\int \phi^2 \rho^{1-\delta}\ln^4(\frac
e\rho)dm}\right).
\end{eqnarray*}
It follows that for any $b>0$
\begin{equation*}
\int |\psi\phi\mu| + |\nm(\psi\phi\mu)| dm < C ||\psi||_{H^1_\mu}||\phi||_{H^1_\mu}
\end{equation*}
as we desired.
\end{proof}

The main ingredient for the proof of Theorem  \ref{thm3} is to use the
calculus inequalities in the Sobolev spaces, see Appendix 3.5 of
\cite{MB2002}: for any positive integer $r>0$ and $u, v \in
L_x^\infty \cap H^r_x$,
\begin{eqnarray}
\label{sobolev calculus}
\sum_{|\gamma| \leq r} || \p^\gamma(uv)-u\p^\gamma v||_{L^2} &\leq& C
\left( ||\nabla u||_{L^\infty} ||v||_{H^{r-1}}+ ||u||_{H^r}||v||_{L^\infty}\right), \\
||uv||_{H^r}&\leq& C (||u||_{L^\infty}||v||_{H^r}+||u||_{H^r}||v||_{L^\infty}).
\label{sobolev calculus-2}
\end{eqnarray}
Note that (\ref{sobolev calculus}) remains
valid when $\partial^\gamma$ on the left hand is replaced by the
corresponding difference operator. \\

\begin{proof}[Proof of Theorem \ref{thm3}]~\\
\textbf{Step1 (well-posedness)} Let a particle path be defined by
\begin{equation*}
\p_t x(t,y)= v(t,x(t,y)),\quad x(0,y)=y,
\end{equation*}
along which the distribution function $\tilde f (t,y,m):=
f(t,x(t,y),m)$ solves
\begin{subequations}\label{tildef}
\begin{eqnarray}
\p_t\tilde f+ \mathcal{L}[ \tilde f]  &=&0,  \label{4.35}\\
\tilde f(0,y,m)&=&f_0(y,m),\\
\tilde f(t,y,m)\nu^{-1}|_{\p B}&=& \tilde q (t,y,m)|_{\p B}. \label{4.35-2}
\end{eqnarray}
\end{subequations}
Here $\mathcal{L}$ is defined in (\ref{of}) with $\kappa$ replaced
by $\tilde \kappa(t,y) = \nabla v(t,x(t,y))$, and $\tilde q
(t,y,m):= q(t,x(t,y),m)$.

In order to show existence of the solution to  \eqref{FPE} under the conditions $v\in C_tH^s_x
\cap L^2_t H^{s+1}_x$ and $\nabla \cdot v=0$,  it suffices to
show that \eqref{tildef} has a solution $\tilde f = \nu (\tilde w +
\tilde q)$ such that
\begin{equation*}
\tilde w: = w(t,x(t,y),m) \in C_tH^s_y L^2_\mu \cap L^2_t H^{s}_y
H^1_\mu,
\end{equation*}
assuming that
\begin{equation}\label{f_0}
\tilde \kappa \in  L^2_t H^s_y, \quad  w_0
\in H^s_y L^2_\mu,\quad \tilde q \in C^1_t H^{s}_y H^1_\mu.
\end{equation}
These follow from (\ref{regularity}) since $|\tilde
\kappa(t)|_{s}\leq C |v(t)|_{s+1}$ for $t>0$, $w_0(x, m)
=f_0\nu^{-1}-\tilde q(t=0)=w_0(y, m)$,  and $\|\tilde q\|_{1, 1, s}
\leq C\|q\|_{1, 1,s+1}$, for which we have used $\partial_t \tilde
q=\partial_t q+v \cdot \nabla q$.

Using Theorem \ref{prop1} for each $y$,  there exists a unique solution $\tilde f$  such that
$$
\tilde f=\nu(\tilde w+\tilde q)
$$
with $\tilde w$ satisfying (\ref{wth13}), i.e.,
\begin{align}\notag
\sup_t ||\tilde w(t,y,\cdot)||^2_{L^2_\mu} +\frac12 ||\tilde
w(\cdot,y,\cdot) ||^2_{L^2_tH^1_\mu} & \leq e^{F(|\tilde
\kappa(\cdot, y)|)}  \left( ||w_0(y,\cdot)||^2_{L^2_\mu} \right.
\\ \label{+w}
& \qquad \left. +
\int_0^T(1+|\tilde \kappa(\cdot, y)|^2) ||\tilde q(t, y, \cdot)||^2dt\right).
\end{align}
Integration of (\ref{+w}) with respect to $y$, upon exchanging the
order of integration in $y$ and $m$, and using the Sobolev
inequality, $\sup_y |\tilde \kappa|\leq C |\tilde \kappa |_{s-1}$,
gives
\begin{align}\label{bb+}
\sup_t |\tilde w|^2_{0,0} +\frac12\int_0^T |\tilde w|^2_{1,0} dt  &
\leq e^{F( |\tilde \kappa|_{s-1})}\left( |w_0|^2_{0,0} + \|\tilde q\|^2_{1, 1,
0}\right).
\end{align}
Hence $\tilde w \in C_tL^2_yL^2_\mu \cap L^2_tL^2_yH^1_\mu$. On the
other hand, the right hand side of \eqref{+w} is uniformly bounded
in $y$, taking $\sup_y$ of (\ref{+w}) gives
\begin{equation}\label{supw}
\sup_{t,y}\|\tilde w(t, y, \cdot)\|^2_{L^2_\mu}
\leq e^{F(|\tilde \kappa|_{s-1})}(|w_0|^2_{0,s-1}+ ||\tilde
q||^2_{1,1,s-1}).
\end{equation}
We now use an induction argument to prove that $\tilde w \in
C_tH^{r}_yL^2_\mu \cap L^2_tH^{r}_yH^1_\mu$ for $0\leq r \leq s$,
and
\begin{equation}\label{induction}
\sup_t |\tilde w|^2_{0,r}+\frac12\int_0^T|\tilde w|^2_{1,r} dt \leq
e^{F(|\tilde \kappa|_{s})}(|w_0|^2_{0,s}+||\tilde q||^2_{1,1,s}).
\end{equation}
The case $r=0$ has been proved as shown in (\ref{bb+}). Suppose
(\ref{induction}) holds for $r=k$, we only need to show
(\ref{induction}) for $r=k+1\leq s$.

To prove regularity of $\tilde f$ in the $y$ variable, we use
difference quotients. Define the difference operator in the $y$
variable as
\begin{equation*}
\delta^\gamma:=\delta_1^{\gamma_1}\cdots \delta_N^{\gamma_N},\quad
\delta_i u(y):= \frac{1}{\eta}[u(y+\eta e_i)-u(y)].
\end{equation*}
Apply $\delta^\gamma$ to (\ref{tildef}) with $|\gamma|\leq s$,  then
\begin{subequations}\label{fD}
\begin{eqnarray}
\p_t\dg\tilde f + \mathcal{L}[\dg \tilde f] &=&  \nm\cdot J,\label{fD1}\\
\dg \tilde f(0,y,m)&=&\dg f_{0}(y,m),\\
\dg \tilde f(t,y,m)\nu^{-1}|_{\p B}&=& \dg \tilde q(t,y,m) |_{\p B},
\label{fD2}
\end{eqnarray}
\end{subequations}
where
\begin{equation}\label{J}
J=\tilde \kappa m \dg \tilde f -\dg(\tilde \kappa m \tilde f).
\end{equation}
This when transformed into the w-problem of form \eqref{ww} involves
the following non-homogeneous term
\begin{equation}\label{h+}
h=-\p_t \dg \tilde q -\mu^{-1}L[\dg \tilde q] + \nm \cdot J
\nu^{-1}.
\end{equation}
Using Theorem \ref{prop1} again for each $y$, $\dg \tilde f$ is the
unique solution to \eqref{fD} as long as $h \in L^2_t(\Hg)^*$.
Moreover,
$$
\dg \tilde f=\nu (\dg \tilde w+\dg \tilde q),
$$
where $\dg \tilde w$,  using \eqref{w_estimate4}, satisfies
\begin{eqnarray*} \sup_t ||\dg \tilde w(t,y,\cdot) ||^2_{L^2_\mu}
+\frac12 ||\dg \tilde w(\cdot,y,\cdot)||^2_{L^2_tH^1_\mu} \leq
e^{F(|\tilde \kappa(\cdot, y)|)}  \left( ||\dg w_{0} ||^2_{L^2_\mu}
+ ||h||^2_{L^2_t(\Hg)^*}\right).
\end{eqnarray*}
Integration in $y$ gives
\begin{align}\notag
 \sup_t |\dg \tilde w |^2_{0, 0} +\frac12 \int_0^T|\dg \tilde w|^2_{1,
 0}dt
& \leq e^{F(\sup_y |\tilde \kappa(\cdot, y)|)}  \left( |\dg
w_{0} |^2_{0, 0} + ||h||^2_{L^2_tL^2_y (\Hg)^*}\right) \\
\label{fdd} & \leq e^{F(|\tilde \kappa|_{s-1})}
\left( | w_0 |^2_{0, s} +||h||^2_{L^2_t L^2_y (\Hg)^*}\right).
\end{align}
We now turn to bound the last term in the above inequality. For any
$\phi \in \Hg$ and $J$ defined in (\ref{J}), Lemma \ref{lem17}
allows the use of integration by parts. Hence,
\begin{align*}
\left|\int \nabla_m \cdot J  {\nu^{-1}}\phi \mu dm \right|
& \leq
\left(\int |J \nu^{-1}| |\nu\nabla_m \frac{\mu}{\nu}||\phi| dm +
\int |J \nu^{-1}| |\nabla_m \phi| \mu dm \right) \\
& \leq C \|J\nu^{-1}\|_{L^2_\mu} (\|\phi\|_{L^2_{\mu^*}}+\|\nabla_m
\phi\|_{L^2_\mu}) \\
& \leq C ||J \nu^{-1}||_{L^2_\mu} ||\phi||_{H^1_{\mu}}.
\end{align*}
Here we have used $\dis |\nu \nabla_m \frac{\mu}{\nu}| \leq C
\sqrt{\mu^*\mu}$ and the embedding theorem (\ref{2.27}).
This together with Lemma \ref{hq} and (\ref{h+})
yields
\begin{equation}\label{4.9-1}
\|h\|^2_{L^2_tL^2_y (\Hg)^*} \leq C \int_0^T(1+\sup_y |\tilde
\kappa(t, y)|^2) \int \|\dg \tilde q(t, y, \cdot) \|^2dy dt +
C \int_0^T |J\nu^{-1}|^2_{0, 0}dt.
\end{equation}
For $|\gamma|\leq s$, the first term on the right side is bounded by
\begin{equation}\label{t1}
F(|\tilde \kappa |_{s-1})\|\delta^\gamma \tilde q\|^2_{1, 1, 0}\leq
F(|\tilde \kappa |_{s-1})\|\tilde  q\|^2_{1, 1, s}.
\end{equation}
To obtain (\ref{induction}) for $r=k+1\leq s$, it remains to
estimate the last term in (\ref{4.9-1}) with $|\gamma|=k+1$. In
fact,
\begin{align*}
|J\nu^{-1}|^2_{0, 0}& =
|(\dg(\tilde \kappa m \tilde f) -\tilde \kappa m \dg
\tilde f )\nu^{-1}|^2_{0, 0} \\
& \leq C (\sup_y |\nabla_y \tilde \kappa|^2 |\tilde f\nu^{-1}|^2_{0, k}
+|\tilde \kappa|^2_{k+1}\sup_y\|\tilde f\nu^{-1}\|_{L^2_\mu}^2)\\
& \leq C |\tilde \kappa |^2_{s}(|\tilde w |^2_{0, k}+ \sup_y \|\tilde w\|_{L^2_\mu}^2
+\|\tilde q\|^2_{1, 1, s}),
\end{align*}
where we have used (\ref{sobolev calculus}) with $\partial^\gamma$
replaced by $\dg$.

\noindent Using (\ref{induction}) for $r=k$ and (\ref{supw}) we have
$$
\int_0^T|J\nu^{-1}|^2_{0, 0}dt \leq e^{F(|\tilde \kappa|_{s})}(|w_0|^2_{0,s}+||\tilde
q||^2_{1,1,s}).
$$
This and (\ref{t1}) when inserted into (\ref{4.9-1}) gives a bound
for $\|h\|^2_{L^2_tL^2_y (\Hg)^*}$.  That bound combined with  (\ref{fdd})
yields
\begin{align*}
 \sup_t |\dg \tilde w |^2_{0, 0} +\frac12 \int_0^T |\dg \tilde w|^2_{1, 0}dt
  \leq e^{F(|\tilde \kappa|_{s})}  \left( |w_0 |^2_{0, s} +
 \|\tilde q\|^2_{1, 1, s} \right)<\infty, \quad |\gamma|=k+1.
  \end{align*}
Sending $\eta \to 0$ we obtain (\ref{induction}) with
$r=k+1$.  Hence, (\ref{induction}) holds for any $r\leq s$,
and thus the solution $f$ to \eqref{FPE} exists, and
\begin{equation*}
\sup_t| w|^2_{0,s} + \frac12\int_0^T| w|^2_{1,s} dt <\infty.
\end{equation*}
One may obtain an upper bound from (\ref{induction}) with
$r=s$ using the inverse map of $x=x(t, y)$. Nevertheless, the next step gives the claimed bound in
(\ref{estimate f}). \\
\textbf{Step2 (a priori estimate)} For a priori estimate, we consider the w-problem \eqref{W}
\begin{eqnarray}\label{step2-1}
\mu (\p_t    + v\cdot\nabla)w + L[w] &=& -\mu( \p_t   + v\cdot\nabla)q - L[q].
\end{eqnarray}
Recall that
\begin{equation*}
L[w]=-\frac12 \nabla_m\cdot (\nabla_m w \mu)+ \nabla_m \cdot (\kappa m w \mu)-K w.
\end{equation*}
Take $\gamma$ derivative in $x$-variable. Then, the left and right hand side of \eqref{step2-1} will be
\begin{eqnarray}\label{3.27}
I &=& \mu(\p_t + v\cdot \nabla) \p^\gamma w -\frac12 \nm \cdot(\nm \p^\gamma w \mu)\\
 &+& \mu[\p^\gamma((v\cdot\nabla)w)-(v\cdot \nabla) \p^\gamma w]\label{3.27-1}\\
 &+& \nm\cdot(\p^\gamma (\kappa m w \mu))\label{3.27-2}\\
 &-& \p^\gamma (K w),\\\label{3.27-3}
\label{3.28}
II &=& -\mu \p_t \p^\gamma q  +\frac12 \nm \cdot(\nm \p^\gamma q \mu)\\
 &-& \mu \p^\gamma((v\cdot\nabla)q)\label{3.28-1}\\
 &-& \nm\cdot(\p^\gamma (\kappa m q \mu))\label{3.28-2}\\
 &+& \p^\gamma (K q).\label{3.28-3}
\end{eqnarray}
We now estimate term by term of
\begin{equation}\label{lr}
\sum_{|\gamma|\leq s}\int\!\int\! I \partial^\gamma w dm dx= \sum_{|\gamma|\leq s}\int\!\int\! II \partial^\gamma w dm dx.
\end{equation}
Since $v$ is divergence free, the first two terms on the left hand side will be
\begin{equation*}
\frac12 \frac{d}{dt} |w|^2_{0,s}+\frac12|\nm w|^2_{0,s}.
\end{equation*}
Indeed, Cauchy inequality shows that the term related to  \eqref{3.27-1} is bounded by
\begin{eqnarray*}
\ep  |w|^2_{0,s} + C_\ep \sum_{|\gamma|\leq s}\int\int|\p^\gamma((v\cdot\nabla)w)-(v\cdot \nabla) \p^\gamma w|^2 \mu dm dx
\end{eqnarray*}
Now, we exchange the order of integration in $x$ and $m$,  and apply \eqref{sobolev calculus} to obtain
\begin{eqnarray*}
&~&  \ep  |w|^2_{0,s} + C_\ep \int \left(||\nabla v||^2_{L^\infty_x}||\nabla w(\cdot,m)||^2_{H^{s-1}_x} + ||v||^2_{H^s_x}||\nabla w(\cdot,m)||^2_{L^\infty_x}\right) \mu dm\\
&\leq& \ep  |w|^2_{0,s} + C_\ep  | v|^2_s|w|^2_{0,s},
\end{eqnarray*}
where the Sobolev inequality,  $|u|_{0}\leq C |u|_{s-1}$ for any
$u\in H^{s-1}_x$, is invoked in the last inequality. Similarly, the
term with \eqref{3.27-2} will be estimated as follows due to
\eqref{sobolev calculus-2};
\begin{eqnarray*}
   &~&\ep |\nm w|^2_{0,s} + C_\ep \sum_{|\gamma|\leq s} \int\int|\p^\gamma(\kappa m w)|^2 \mu dm dx\\
   &\leq& \ep |\nm w|^2_{0,s} + C_\ep \int \left(|\kappa|^2_{L^\infty_x}|w(\cdot,m)|^2_{s} + |\kappa|^2_{s}
   |w(\cdot,m)|^2_{L_x^\infty}\right) \mu dm\\
   &\leq& \ep |\nm w|^2_{0,s} + C_\ep |v|^2_{s+1}|w|^2_{0,s}
\end{eqnarray*}
Recall that
\begin{equation*}
K=\left\{
      \begin{array}{ll}
        0, & 0<b<2, \\
        (N+2\kappa m\cdot m)  \dis \ln \frac{e}{\rho}, &  b=2, \\
        (N+2\kappa m\cdot m) (b/2-1) \rho^{1-b/2}, & b>2.
      \end{array}
    \right.
\end{equation*}
Thus, we can express $K$ as
\begin{equation}\label{3.31}
K = c_1 \sqrt{\mu\mu^*} + c_2\kappa m\cdot m \sqrt{\mu\mu^*}
\end{equation}
for some positive constat $c_i$ depending on $N$ and $b$. We now
estimate the last term on the left hand side, by using
\begin{eqnarray*}
 \p^\gamma(K w)\p^\gamma w  = c_1
|\p^\gamma w|^2 \sqrt{\mu\mu^*} + c_2 \p^\gamma(\kappa m\cdot m w)
\p^\gamma w \sqrt{\mu\mu^*}.
\end{eqnarray*}
The Cauchy inequality and the embedding theorem \eqref{2.27} give
\begin{align*}
c_1 \sum_{|\gamma|\leq s}\int \!\! \int |\p^\gamma w|^2
\sqrt{\mu\mu^*}dm dx & =c_1 \int |w(t,\cdot, m)|_{s}^2\sqrt{\mu
\mu^*}dm \\
& \leq \epsilon \int |w(t, \cdot, m)|^2_{s}\mu^*dm +C_\epsilon \int
|w(t, \cdot, m)|^2_{s}\mu dm \\
& \leq \ep |\nm w|^2_{0,s} + C_\ep|w|^2_{0,s}.
\end{align*}
Similarly,
\begin{eqnarray*}
c_2 \sum_{|\gamma|\leq s} \int\int |\p^\gamma(\kappa m \cdot m
w)\p^\gamma w|\sqrt{\mu\mu^*} dm dx \leq \ep |\nabla_m w|^2_{0,s} +
C_\ep \int\!\! \int |\p^\gamma(\kappa m \cdot m w)|^2\mu dm dx.
\end{eqnarray*}
The last term, using \eqref{sobolev calculus-2} and the Sobolev
inequality for $\kappa=\nabla v$, is then bounded by
$$
C_\epsilon |v|^2_{s+1}|w|^2_{0, s}.
$$
Hence,
\begin{eqnarray*}
\left|\sum_{|\gamma|\leq s} \int\!\! \int \p^\gamma(K w)\p^\gamma w
dm dx \right| \leq \ep |\nm w|^2_{0,s} +
C_\ep(|v|^2_{s+1}+1)|w|^2_{0,s}.
\end{eqnarray*}
Now we turn to the right hand side, related to
\eqref{3.28}-\eqref{3.28-3}. The estimation is similar to that for
the left hand side. Except that here we have to assume higher
regularity of $q$ in $x$ than that of $w$ since  $\dis  \int
v\cdot\int \nabla \p^\gamma q \p^\gamma w \mu dm dx$ does not vanish
as $\dis \int v\cdot \int \nabla \p^\gamma w \p^\gamma w \mu dm dx$.
Indeed, the first two terms, related to \eqref{3.28} are bounded by
\begin{eqnarray*}
\ep |w|^2_{0,s} + C_\ep |\p_t q|^2_{0,s} + \ep|\nm w|^2_{0,s} + C_\ep |\nm q|^2_{0,s},
\end{eqnarray*}
and the other terms are estimated as follows;
\begin{eqnarray*}
\sum_{|\gamma|\leq s} \left|\int\!\! \int \p^\gamma(v\cdot \nabla
q)\p^\gamma w \mu dm dx \right|
&\leq & \ep |w|^2_{0,s} + C_\ep |v|^2_s|q|^2_{0,s+1}, \\
\sum_{|\gamma|\leq s} \left|\int\!\! \int \p^\gamma (\kappa m q) \nm
\p^\gamma w \mu dm dx \right|
&\leq & \ep |\nm w|^2_{0,s} + C_\ep |v|^2_{s+1}|q|^2_{0,s},\\
\sum_{|\gamma|\leq s} \left| \int\!\!\int \p^\gamma (K q) \p^\gamma
w dm dx \right| & \leq & \ep |\nabla_m w|^2_{0,s} + C_\ep
|q|^2_{0,s} + C_\ep |v|^2_{s+1}|q|^2_{0,s}.
\end{eqnarray*}
We combine all estimates for sufficiently small $\ep$ to obtain
\begin{equation}\label{5.2}
\p_t |w|^2_{0,s} +\frac12 |\nm w|^2_{0,s} \leq C
(|v|^2_{s+1}+1)\left( |w|^2_{0,s}+ (|q|^2_{1,s+1} + |\p_t
q|^2_{1,s+1})\right).
\end{equation}
We deduce that
$$
|w|^2_{0,s} +\frac12\int_0^t |\nm w|^2_{0,s} dt \leq  e^{F(|v|_{s+1})} \left(|w_0|^2_{0,s} + F(|v|_{s+1}) \|q\|^2_{1, 1, s+1} \right).
$$
Replacing $Fe^F$ by $e^F$ leads to  (\ref{estimate f}).
\end{proof}

\section{Coupled system}\label{section coupled}
In this section, we prove Theorem \ref{thm1} by the fixed point
argument as described in Section \ref{sec2}.

We begin with a key lemma, which will be used to estimate the stress $\tau$.
\begin{lem}\label{lem2}
Suppose that $\phi \in \Hg$. For any $\ep>0$ there exists $C_\ep$
such that
\begin{equation}\label{4.8}
\left|\int \phi\nu\rho^{-1} dm \right|^2 \leq C_\ep \int |\phi|^2\mu dm+ \ep
\int |\nm \phi|^2 \mu dm.
\end{equation}
\end{lem}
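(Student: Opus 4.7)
The plan is to combine the weighted Sobolev embedding in Lemma~\ref{lem1} with a boundary cutoff so as to trade the singularity of $\nu/\rho$ near $\p B$ for a small coefficient on the gradient term. The inequality without the splitting ($\ep$ replaced by a fixed constant) follows from a single application of Cauchy--Schwarz with the decomposition $\phi\nu\rho^{-1}=(\phi\sqrt{\mu^*})\cdot(\nu/(\rho\sqrt{\mu^*}))$, giving
\begin{equation*}
\left|\int_B \phi\nu\rho^{-1}\,dm\right|^2 \leq \int_B |\phi|^2\mu^*\,dm \cdot \int_B \frac{\nu^2}{\rho^2\mu^*}\,dm.
\end{equation*}
A case by case check from \eqref{nu}, \eqref{mu} and \eqref{ms} yields
\begin{equation*}
\frac{\nu^2}{\rho^2\mu^*}=
\begin{cases}
\rho^{b/2}, & 0<b<2,\\
\rho\wet, & b=2,\\
\rho^{b/2}, & b>2,
\end{cases}
\end{equation*}
which is integrable over $B$ in every regime, so combined with \eqref{2.27} of Lemma~\ref{lem1} this produces the non-sharp bound $|\int\phi\nu\rho^{-1}dm|^2 \leq C\int(|\phi|^2+|\nm\phi|^2)\mu\,dm$.

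To upgrade the gradient coefficient to an arbitrary $\ep$, I would split the integral through the two regions $\{\rho\geq\delta\}$ and $\{\rho<\delta\}$ for a small $\delta>0$ to be chosen. On the interior piece $\{\rho\geq\delta\}$, $\nu/\rho$ is bounded and $\mu\geq c_\delta>0$, so a direct Cauchy--Schwarz estimate gives
\begin{equation*}
\left|\int_{\{\rho\geq\delta\}}\phi\nu\rho^{-1}\,dm\right|^2 \leq C_\delta \int_B |\phi|^2\mu\,dm.
\end{equation*}
On the boundary strip $\{\rho<\delta\}$ I repeat the Cauchy--Schwarz splitting above, restricted to this strip, and apply \eqref{2.27}:
\begin{equation*}
\left|\int_{\{\rho<\delta\}}\phi\nu\rho^{-1}\,dm\right|^2 \leq \eta(\delta)\int_B |\phi|^2 \mu^*\,dm \leq C\,\eta(\delta)\int_B(|\phi|^2+|\nm\phi|^2)\mu\,dm,
\end{equation*}
where $\eta(\delta):=\int_{\{\rho<\delta\}}\nu^2/(\rho^2\mu^*)\,dm$.

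The decisive point is that $\eta(\delta)\to 0$ as $\delta\to 0$ in all three regimes, since the integrand is one of $\rho^{b/2}$ or $\rho\wet$, each of which vanishes at the boundary; the $b=2$ case needs a quick check that $\rho\wet$ is integrable near $\p B$, which it is. Choosing $\delta$ so small that $C\eta(\delta)\leq \ep$ and adding the two contributions gives the inequality \eqref{4.8} with $C_\ep=2C_\delta+\ep$. The only real obstacle is the verification that $\nu^2/(\rho^2\mu^*)$ simplifies to an integrable power of $\rho$ (with at worst a logarithmic factor when $b=2$); once that arithmetic is in hand, the rest of the argument is a standard small-parameter domain decomposition.
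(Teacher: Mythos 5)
Your proof is correct, and it takes a genuinely different route from the paper. The paper splits into two cases: for $b>2$ it is immediate from Cauchy--Schwarz because $\nu\rho^{-1}=1$ and $\mu^{-1}=\rho^{b/2-2}$ is already integrable, so one gets the bound with $\ep=0$; for $b\leq 2$ the paper proves the inequality non-constructively, by a compactness argument showing that $\inf\{\int|\phi|^2\mu\,dm : \phi\in\Hg,\ \int\phi\nu\rho^{-1}dm=1,\ \|\phi\|_{H^1_\mu}\leq M\}$ is strictly positive, using weak convergence in $L^2_\mu$, $L^2_{\mu^*}$ and the embedding \eqref{2.27}. Your argument instead treats all $b>0$ uniformly: the algebraic identity $\nu^2/(\rho^2\mu^*)\in\{\rho^{b/2},\,\rho\wet\}$ (which you computed correctly in each regime) makes $\eta(\delta)=\int_{\{\rho<\delta\}}\nu^2/(\rho^2\mu^*)\,dm\to 0$ by absolute continuity of the Lebesgue integral, and the interior piece is controlled directly since $\nu/\rho$ and $1/\mu$ are bounded on $\{\rho\geq\delta\}$. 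What the paper's argument buys is brevity for $b>2$ and avoidance of any arithmetic with $\mu^*$; what yours buys is an explicit, constructive constant $C_\ep$ valid for all $b>0$ at once, which is arguably cleaner. The one step you left implicit is the elementary bound $|a+b|^2\leq 2a^2+2b^2$ when combining the two regions, but the form $C_\ep=2C_\delta+\ep$ you wrote shows you have it in mind; for full rigor you should request $2C\eta(\delta)\leq\ep$ rather than $C\eta(\delta)\leq\ep$.
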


\begin{proof}
For $b>2$,  the Cauchy-Schwartz inequality yields
\begin{equation*}
\left|\int \phi dm \right |^2 \leq \int|\phi|^2 \mu dm \int \mu^{-1}dm.
\end{equation*}
For any $\ep>0$, taking  $C_\epsilon=\dis \int \mu^{-1}dm<\infty $,  we obtain
\eqref{4.8} for $b>2$.

For  $b\leq2$,  we define for fixed $M$,
\begin{equation*}
G=\{\phi\in \Hg: \int\phi \nu\rho^{-1}dm =1, ||\phi||_{H^1_\mu}\leq
M\}.
\end{equation*}
It suffices to prove
\begin{equation*}
l:=\inf_{\phi\in G} \int |\phi|^2 \mu dm >0.
\end{equation*}
Let $\{\phi_n\} \subset G$ be a sequence such that
\begin{equation*}
\lim_{n\to\infty} \int |\phi_n|^2 \mu dm = \inf_{\phi\in G} \int
|\phi|^2 \mu dm.
\end{equation*}
Since $\{\phi_n\}$ is bounded in $H^1_\mu$, by embedding
theorem \eqref{2.27}, there exists a subsequence $\{\phi_{n_k}\}$
such that
\begin{eqnarray*}
\phi_{n_k} \rightharpoonup \phi^* &\quad&\text{in~} H^1_{\mu},\\
\phi_{n_k} \rightharpoonup \phi^* &\quad&\text{in~} L^2_{\mu},\\
\phi_{n_k} \rightharpoonup \phi^* &\quad&\text{in~} L^2_{\mu^*}.
\end{eqnarray*}
Furthermore, since $\dis \sqrt{\frac{\mu}{\mu^*}} \in L^2_{\mu^*}$
for $b\leq 2$
\begin{eqnarray*}
\int \phi^* \nu\rho^{-1} dm &=& \int \phi^* \sqrt{\frac{\mu}{\mu^*}}\mu^* dm\\
&=& \lim_{n_k\to\infty }\int \phi_{n_k}
\sqrt{\frac{\mu}{\mu^*}}\mu^* dm =1.
\end{eqnarray*}
This shows that $\phi^* \in G$.  On the other hand,
\begin{equation*}
\int |\phi^*|^2\mu dm \leq \lim_{n_k\to\infty} \int
|\phi_{n_k}|^2\mu dm = l.
\end{equation*}
If $l=0$, then $\phi^*=0$ which is a contradiction to  $\phi^*\in
G$.
\end{proof}

The zero trace of $\phi$ is essential for the estimate (\ref{4.8}).
For the general case, i.e.,  for $\phi \in H^1_\mu$, one can only
have a weaker estimate.
\begin{lem}\label{lem2+}
If $\phi \in H^1_\mu$, then there exists $C$
such that
\begin{equation}\label{4.8+}
\left|\int \phi\nu\rho^{-1} dm \right|^2 \leq C \|\phi\|^2_{H^1_\mu}.
\end{equation}
\end{lem}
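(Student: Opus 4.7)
My plan is to use integration by parts, writing $\nu/\rho$ as the divergence of a radial vector field. Because $\phi \in H^1_\mu$ need not have zero trace on $\partial B$, a genuine boundary term will appear, and I would control it by the $L^1$ trace inequality already recorded in the Remark following Definition \ref{defn1}. The interior term is then handled by a weighted Cauchy--Schwarz.

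First I would construct a radial field $F(m) = m G(|m|)$ satisfying $\nabla_m \cdot F = \nu/\rho$. Since $\nabla_m \cdot (m G(r)) = N G(r) + r G'(r)$, this reduces to the ODE $(r^N G)' = r^{N-1}(\nu/\rho)(b-r^2)$, whose solution is $G(r) = W(r)/r^N$ with
\begin{equation*}
W(r) := \int_0^r s^{N-1} (\nu/\rho)(b - s^2)\, ds.
\end{equation*}
A change of variable $\rho = b - s^2$ shows that $W(\sqrt{b})$ is finite in each of the three regimes for $b$: the only worrisome factor near $\rho = 0$ is $\rho^{b/2-1}$ for $0<b<2$, $\ln(e/\rho)$ for $b=2$, or the constant $1$ for $b>2$, all integrable in $\rho$.

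Next, the divergence theorem yields
\begin{equation*}
\int_B \phi \, \frac{\nu}{\rho}\, dm \;=\; c_0 \int_{\partial B} \phi \, dS \;-\; \int_B F \cdot \nabla_m \phi \, dm,
\end{equation*}
where $c_0 = W(\sqrt{b})\, b^{(1-N)/2}$ is a finite constant. The boundary term is bounded by $|c_0|\,\|\phi\|_{L^1(\partial B)}$, and the trace inequality in the Remark after Definition \ref{defn1}, which relies on $\int_B \mu^{-1}\, dm < \infty$, gives $\|\phi\|_{L^1(\partial B)} \leq C \|\phi\|_{H^1_\mu}$. For the interior term, Cauchy--Schwarz with weight $\mu$ yields
\begin{equation*}
\left|\int_B F \cdot \nabla_m \phi \, dm \right| \;\leq\; \|\nabla_m \phi\|_{L^2_\mu} \left( \int_B |F|^2 \mu^{-1}\, dm \right)^{1/2},
\end{equation*}
and in spherical coordinates $\int_B |F|^2 \mu^{-1}\, dm = |S^{N-1}| \int_0^{\sqrt{b}} W(r)^2 r^{-(N-1)} \mu^{-1}(\rho(r))\, dr$. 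Near $r = 0$ one has $W(r) \sim r^N$, so the integrand is bounded; near $r = \sqrt{b}$ the factor $W$ is bounded and integrability reduces once more to $\int \mu^{-1}\, dm < \infty$.

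The main obstacle is the case-by-case verification of the integrability of $\mu^{-1}$ and of $W$ across the three regimes $0<b<2$, $b=2$, $b>2$. The delicate case is $b = 2$: here the logarithmic factor in $\mu = \rho \ln^2(e/\rho)$ is precisely what absorbs both the $\ln(e/\rho)$ singularity of $\nu/\rho$ and the $\mu^{-1}$ integral, via the substitution $u = \ln(e/\rho)$. Once these weight computations are in hand, combining the two estimates gives $|\int_B \phi \, \nu/\rho\, dm| \leq C \|\phi\|_{H^1_\mu}$, and squaring yields (\ref{4.8+}).
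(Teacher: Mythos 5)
Your approach is genuinely different from the paper's and is essentially sound, so let me compare first. The paper treats $b>2$ by direct weighted Cauchy--Schwarz (here $\nu/\rho=1$ and $\int_B\mu^{-1}dm<\infty$), and for $b\leq 2$ splits the weight as $\rho^{-1+\delta}\cdot\rho^{1-\delta}$ and then invokes the Hardy-type embedding \eqref{2.27-1} from Lemma \ref{lem1} to control $\int|\phi|^2\rho^{-1+\delta}dm$ by $\|\phi\|^2_{H^1_\mu}$. You instead build a bounded radial field $F$ with $\nabla_m\cdot F=\nu/\rho$, integrate by parts, control the boundary piece by the $L^1(\partial B)$ trace (which comes from the Remark after Definition \ref{defn1} and the fact that $\int_B\mu^{-1}dm<\infty$ for every $b>0$), and control the interior piece by weighted Cauchy--Schwarz. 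This bypasses Lemma \ref{lem1} entirely and treats all $b>0$ uniformly, which is a genuine simplification at the conceptual level; the weight computations you outline (including the $u=\ln(e/\rho)$ substitution at $b=2$) check out, and the constant $c_0=W(\sqrt b)\,b^{(1-N)/2}$ is correct since $F\cdot n=|m|\,G(|m|)$ on $\partial B$.

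There is one real gap you should close, and it is precisely in the regime $b\leq 2$ where your construction is doing the work. You invoke ``the divergence theorem'' for $\phi F$ with $\phi\in W^{1,1}(B)$, but $F$ is \emph{not} Lipschitz on $\bar B$ there: although $|F|=W(r)/r^{N-1}$ is bounded, $\nabla_m\cdot F=\nu/\rho$ is unbounded (it behaves like $\rho^{b/2-1}$ for $b<2$ and like $\ln(e/\rho)$ for $b=2$). Consequently $\phi F\in W^{1,1}(B)$ is not automatic; in fact the needed property $\phi\,\nabla_m\cdot F\in L^1(B)$ is exactly the integrability you are trying to establish, so as written the step is circular. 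The fix is standard but must be stated: apply the divergence theorem on $B_{\sqrt b-\ep}$ where $F$ is smooth, split $\phi=\phi^+-\phi^-$ and let $\ep\downarrow 0$ using monotone convergence on $\int_{B_{\sqrt b-\ep}}\phi^{\pm}(\nu/\rho)\,dm$; for the boundary term, use that $r\mapsto\int_{\partial B_r}\phi\,dS$ is absolutely continuous on $(0,\sqrt b)$ when $\phi\in W^{1,1}(B)$ (since $\frac{d}{dr}\int_{\partial B_r}\phi\,dS$ is dominated by the surface integral of $|\phi|+|\nabla\phi|$, which is in $L^1(dr)$), so it converges to the $L^1(\partial B)$ trace. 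With that limiting argument supplied, your proof is complete.
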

\begin{proof} For $b>2$, we have
\begin{equation*}
\left|\int \phi \nu\rho^{-1}dm \right|^2 \leq C \int |\phi|^2\mu dm, \quad C:= \int \mu^{-1} dm<\infty.
\end{equation*}
For $b\leq 2$,
\begin{eqnarray*}
\left|\int \phi \nu\rho^{-1}dm \right|^2 \leq C_\delta \int
|\phi|^2\rho^{-1+\delta}dm, \quad C_\delta:= \left(\int
\nu^2\rho^{-1-\delta}dm\right).
\end{eqnarray*}
We choose $\delta >0$ small enough so that $C_\delta$ is bounded.  On the other hand,  by  \eqref{2.27-1}  in Lemma \ref{lem1}   we have
\begin{eqnarray*}
\int |\phi|^2\rho^{-1+\delta}dm&\leq& C\int (|\phi|^2+|\nm \phi|^2)\rho^{b/2} dm = C\int (|\phi|^2+|\nm \phi|^2)\mu dm, \quad b<2\\
\int |\phi|^2\rho^{-1+\delta}dm &\leq& C\int (|\phi|^2+|\nm \phi|^2)\rho dm
\leq C \int (|\phi|^2+|\nm \phi|^2)\mu dm, \quad b=2.
\end{eqnarray*}
This completes the proof.
\end{proof}

We  now turn to the map
\begin{eqnarray*}
\F : &\mathbf{M} & \to \mathbf{M}\\
     &(u,\varpi)&  \mapsto (v,w),
\end{eqnarray*}
and
\begin{equation*}
\mathbf{M}= \left\{(v,w) :  \sup_{0\leq t\leq T} |v|^2_s \leq A_1,
\sup_{0\leq t\leq T} |w|^2_{0,s} + \frac12\int_0^{T} |\nm w|^2_{0,s}dt
\leq A_2 \right\}.
\end{equation*}
We first prove that,  given $v_0\in H^s_x, f_0\nu^{-1}\in H^s_xL^2_\mu$ and $q\in
C^1_tH^{s+1}_xH^1_\mu$, the map $\mathcal F$ is well defined, i.e.,  $\mathcal F
(\mathbf{M}) \subset \mathbf{M}$ for some $A_1, A_2, T$.

Let $(u, \varpi)\in \mathbf{M}$. It is now well known that  \eqref{NSE} has a unique solution $v$
such that
\begin{equation}\label{ineq3}
\sup_t |v|^2_s + \int_0^T |v|^2_{s+1} dt \leq |v_0|^2_s + C\int_0^T
|u|_s|v|^2_s dt+ \int_0^T |\tau|^2_sdt, \quad  s>N/2+1.
\end{equation}
By Gronwall's inequality  and  $\dis\sup_{0\leq t\leq T} |u|^2_s \leq A_1$, we have
\begin{equation}\label{4.6-2} \sup_t
|v|^2_s + \int_0^{T}|v|^2_{s+1} dt \leq \left(|v_0|^2_s +
\int_0^{T}|\tau|^2_sdt \right)e^{C\sqrt{A_1}T}.
\end{equation}
We proceed to estimate the stress term
$$
\int_0^T |\tau|_s^2dt=\int_0^T\sum_{|\gamma|\leq s}\int |\partial^\gamma \tau|^2dx dt,
$$
where using Lemma \ref{lem2},
\begin{align*}
 |\partial^\gamma \tau|^2 &= b^2 \left | \int_B m\otimes m\partial^\gamma(\varpi +q)\nu \rho^{-1} dm \right|^2\\
&\leq C_\ep \int |\partial^\gamma \varpi |^2\mu dm + \frac\ep2\int |\partial^\gamma \nm \varpi |^2\mu dm
+ 2b^4 \left| \int \partial^\gamma q \nu\rho^{-1}dm \right|^2.
\end{align*}
Using (\ref{4.8+}) the last term is uniformly bounded by
$$
C\|\partial^\gamma q(t, x, \cdot)\|^2_{H^1_\mu}\leq C\|q\|^2_{1, 1, s+1}.
$$
Hence for $(u, \varpi)\in \mathbf{M}$  we obtain
\begin{eqnarray}\label{tau}
\int_0^{T} |\tau|^2_s dt \leq  C_\ep T A_2  + \epsilon A_2 +  C T |q|_{1, 1, s+1}^2 \leq CT(A_2+\|q\|_{1, 1, s+1}^2)+\epsilon A_2,
\end{eqnarray}
where we have used the assumption  $q\in C^1_tH^{s+1}_xH^1_\mu$.

We choose $A_1$ as
\begin{equation}
A_1 = 2|v_0|^2_s e,
\end{equation}
 $A_2$ as
\begin{equation}\label{4.3}
A_2=(|w_0|^2_{0,s}+ \|q\|^2_{1, 1, s+1})e^{C(T+A_1)}
\end{equation}
for $ T\leq 1/(C\sqrt{A_1})$.

Hence, if  $T$ and $\ep$  are chosen small enough so that
\begin{equation*}
CT(A_2+ \|q\|_{1, 1, s+1}^2)+ \ep A_2 \leq \frac1{2e} A_1,
\end{equation*}
we get
\begin{equation}
e^{C\sqrt{A_1}T}  \left( |v_0|^2_s + CT(A_2+|q|_{1, 1, s+1}^2)+\epsilon A_2 \right)  \leq e(|v_0|_s^2 + \frac1{2e} A_1) \leq A_1.
\end{equation}
This together with  \eqref{4.6-2}, (\ref{tau})  gives
\begin{equation}\label{4.7}
\sup_t |v|^2_s + \int_0^{T_1}|v|^2_{s+1} dt\leq  A_1.
\end{equation}
Estimate \eqref{estimate f} in Theorem \ref{thm3}, \eqref{4.3} and \eqref{4.7} yield
\begin{equation}
\sup_t |w|^2_{0,s} + \frac12\int_0^{T} |\nm w|^2_{0,s} dt\leq A_2.
\end{equation}
So the map $\mathcal{F}$ is well defined in
$\mathbf{M}$.

Next, we show that $\mathcal{F}$ is a contraction mapping for small
enough $T$ using a weak norm on $\mathbf{M}$, i.e.
\begin{equation}
||(v,w)||^2_\mathbf{M}:=\sup_t |v|^2_0 +  \sup_t |w|^2_{0,0} +
\frac12 \int_0^{T} |\nm w|^2_{0,0} dt.
\end{equation}
Suppose that $v_i(i=1,2)$ are  solutions of the NSE \eqref{NSE} with
$u_i (i=1,2)$ and $\tau_i (i=1,2)$ computed from $\varpi_i (i=1,2)$
respectively.  Then we obtain
\begin{equation}\label{nv.1}
\p_t v + (u_2\cdot \nabla) v +(u \cdot \nabla)v_1 + \nabla p= \nabla \cdot \tau +
\Delta v, \quad v(0, \cdot)=0,
\end{equation}
where $v=v_2-v_1, u=u_2-u_1, p=p_2-p_1$, $\tau=\tau_2-\tau_1$ and
$\varpi=\varpi_2-\varpi_1$.
 Multiplication by $v$ to \eqref{nv.1}
and integration with respect to $x$ yield
\begin{eqnarray*}
\frac12 \frac d{dt}|v|^2_0 +\int (u\cdot \nabla v_1) v dx = - \int \tau
\nabla v dx - \int |\nabla v|^2 dx.
\end{eqnarray*}
Hence
\begin{eqnarray}
\frac{d}{dt} |v|_0^2 + |\nabla v|^2_0 &\leq& |u|^2_0 +|\tau|^2_0+
\sup_x|\nabla v_1|^2 |v|^2_0\nonumber\\
&\leq& |u|^2_0 +|\tau|^2_0+ A_1|v|^2_0.
\label{4.18}
\end{eqnarray}
Let $f_i$ be the
solutions to (\ref{FPE}) associated with $v_i(i=1,2)$.
Then
$$
w=(f_2-f_1)\nu^{-1}=:w_2-  w_1
$$
solves
\begin{subequations}
\label{contraction_f}
\begin{eqnarray}
&\dis\p_t w \mu + v_2 \cdot \nabla w \mu + L_2[w] = -v\cdot \nabla
w_1\mu -\nm  \cdot (\nabla v
m \tilde w_1 \nu )\frac{\mu}{\nu}, \label{4.15} &\\
&w(0,x,m)=0,&\\
&w(t,x,m)|_{\p B}=0,&
\end{eqnarray}
\end{subequations}
where $L_2[w]=L[w]$ defined in (\ref{lw}) with
$\kappa =\nabla v_2$. Note that $ w_i|_{\p B}= q|_{\p B}$,
i.e. $ w_i(t,x,\cdot) \in H^1_\mu$, so
$w(t,x,\cdot)\in \Hg$.

We deduce from \eqref{4.15} that
\begin{eqnarray*}
\frac12 \frac{d}{dt} |w|^2_{0,0} + \frac12 |\nm w|^2_{0,0} &\leq& \int\int
|\nabla v_2 m \cdot  \nm w \nu w | dmdx + \int\int
|Kw^2|dmdx\\
 &+& \int |v\cdot \int \nabla  w_1 w\mu dm | dx+
\int\left |\int \nm \cdot (\nabla v m w_1\nu)\frac{\mu}{\nu}wdm \right|dx.
\end{eqnarray*}
Similar to that led to  \eqref{estimate f},  first two terms on the right hand side are bounded by
\begin{equation*}
C_\ep(|v_2|^2_{s}+1)|w|^2_{0,0} + \ep|\nm w|^2_{0,0},
\end{equation*}
and the third term
\begin{eqnarray*}
\int |v\cdot \int \nabla  w_1 w\mu dm | dx &\leq& C\int |v|^2 \int |\nabla  w_1|^2\mu dm dx + \int\int |w|^2\mu dm dx \\
&\leq& C|v|^2_0|w_1|^2_{0,s} + |w|^2_{0,0}.
\end{eqnarray*}
The last term, using integration by parts with vanished boundary term  due to Lemma
\ref{lem17},  is bounded by
\begin{align*}
\int \left|\int \nm \cdot (\nabla v m  w_1\nu)\frac{\mu}{\nu}wdm \right|dx
&=  \int \left|\int \nabla v m  w_1\nu \cdot \nm \left(\frac{\mu}{\nu}w\right)dm \right| dx \\
& \leq
C_\ep |\nabla v|^2_0 | w_1|^2_{0,s} + \ep |\nm w|^2_{0,0}.
\end{align*}
Putting all together we have
\begin{eqnarray*}
\frac{d}{dt} |w|^2_{0,0} + \frac12 |\nm w|^2_{0,0} &\leq&
C(|v_2|^2_s+1)|w|^2_{0,0} + C|w_1|^2_{0,s}(|v|^2_0+ |\nabla
v|^2_0)\\
&\leq& C(A_1+1)|w|^2_{0,0}+ CA_2(|v|^2_0+ |\nabla v|^2_0).
\end{eqnarray*}
Substitution of the estimates of $|\nabla v|^2_0$ and $\frac{d}{dt} |v|^2_0$
in \eqref{4.18} gives
\begin{eqnarray}\label{4.17}
\frac{d}{dt} (|v|^2_0 + |w|^2_{0,0}) + \frac12 |\nm w|^2_{0,0} \leq D
(|v|^2_0 + |w|^2_{0,0}) + D |u|^2_0  + D |\tau|^2_0,
\end{eqnarray}
where $D$ is a large constant depending on $C,A_1,A_2$, for example we may choose
$$
D=C(A_1+1)(A_2+1).
$$
The Gronwall inequality gives
\begin{equation*}
\sup_t (|v|^2_0+|w|^2_{0,0}) + \frac12\int_0^{T^*}|\nm w|^2_{0,0}dt
\leq De^{DT}\int_0^{T^*} |u|^2_0+ |\tau|^2_0 dt
\end{equation*}
for any $0<T^*\leq T$. Due to the similar estimate for $\tau$ as
\eqref{tau}, the right hand side is bounded by
\begin{eqnarray*}
De^{DT}\left( T^* \sup_t |u|^2_0 + C_\ep T^* \sup_t
|\varpi|^2_{0,0} + \ep\int_0^{T^*} |\nm \varpi|^2_{0,0} dt \right).
\end{eqnarray*}
We choose $\ds \ep = \frac1{4De^{DT}}$, $\ds T^*=\frac12\min
\left\{T, \frac1{(C_\ep+1)De^{DT}} \right\}$ and redefine $T=T^*$  to obtain
\begin{equation}
 ||(v_2, w_2)-(v_1,w_1)||^2_\mathbf{M}= ||(v, w)||^2_\mathbf{M}
\leq \frac12 ||(u_2,\varpi_2)-(u_1,\varpi_1)||^2_\mathbf{M}.
\end{equation}
This shows that $\mathcal{F}$ has a fixed point $(v,w)$ in
$\mathbf{M}$, which is a solution to the coupled problem
\eqref{main1}. Since $\mathcal{F}(v,w)=(v,w)$, \eqref{ineq3} and
Theorem \ref{prop1} imply that  $(v,w)\in \mathbf{X}_\mu$.

The uniqueness follows from the same computation of estimates for
the contraction mapping. Let $(v_i,f_i\nu^{-1}) (i=1,2)$ be
solutions of the coupled problem \eqref{main1}. Then $v=v_2-v_1$
solves \eqref{nv.1} with $u_i=v_i$, $u=v$, and $\tau=\tau_2-\tau_1$
computed from $f_i$. $w=(f_2-f_1)\nu^{-1}$ also solves
\eqref{contraction_f} with $ w_1= f_1\nu^{-1}$. Similar to
\eqref{4.17}, we obtain
\begin{eqnarray*}
\frac{d}{dt} (|v|^2_0 + |w|^2_{0,0}) + \frac12 |\nm w|^2_{0,0} \leq D
(|v|^2_0 + |w|^2_{0,0} + |\tau|^2_0).
\end{eqnarray*}
It follows from the estimate for $\tau$ and Gronwall inequality that
$(v,w)\equiv (0,0)$,  which gives the uniqueness of  problem
\eqref{main1}.

\section{A further look at $b\geq 6$}\label{section6}

In this section, we sketch proofs of Theorem \ref{thm4} and Theorem
\ref{thm2} for the case of $\mu=\mu_0$.

Consider \eqref{FPE} when $x$ is not involved, i.e.,  \eqref{ff}.
The corresponding w-problem for $w=f\nu^{-1}-q$ with $\mu=\mu_0$
solves \eqref{ww} with the operator $L$ replaced by
\begin{equation}\label{L_0}
L_0[w]=-\frac12 \nabla \cdot (\nabla w \mu_0) + \left(2-\frac12 b
-\theta \right)m \cdot \nabla w \rho^{\theta-1}+ \nabla\cdot (\kappa  m w \mu_0) -
K_0 w,
\end{equation}
where
\begin{equation}\label{K_0_1}
K_0= \left[N(b/2-1)+2\kappa m\cdot m(1-\theta)\right]\rho^{\theta-1}.
\end{equation}
Define the conjugate of
$\mu_0$ as  \eqref{ms},  $\mu_0^*=\rho^{\theta-2}$,
then  $K_0$ can be rewritten as
\begin{equation}\label{K_0_2}
K_0= [N(b/2-1)+2\kappa m\cdot m(1-\theta)]\sqrt{\mu_0\mu_0^*}.
\end{equation}

To ensure well-posedness of \eqref{ww}, we need to check the
coercivity of $\mathcal{B}_0[w,w;t]$, which is defined as
\begin{align*}
\frac12 \int |\nabla w|^2\mu_0 dm & = \mathcal{B}_0[w,w;t] - \left(2-\frac12 b
-\theta \right)
\int m \cdot \nabla w w \rho^{\theta-1} dm \\
& \qquad -\int \nabla\cdot  (\kappa m w \mu_0)w dm + \int K_0 w^2dm.
\end{align*}
From the proof of Lemma \ref{lem Energy}, the last two terms are
bounded by
\begin{equation*}
C_\ep \int w^2\mu_0 dm + \ep \int |\nabla w|^2 \mu_0 dm,
\end{equation*}
where the embedding theorem \eqref{2.27} has been used. For small
enough $\ep$, this estimate yields
\begin{equation*}
\frac14\int |\nabla w|^2 \mu_0 dm \leq \mathcal{B}_0[w,w;t] + C \int
w^2 \mu_0 dm,
\end{equation*}
as long as
\begin{equation*}
\int \left(2-\frac12 b -\theta\right)m\cdot \nabla w
w\rho^{\theta-1} dm \geq 0,
\end{equation*}
for $w\in \Hgo$. This is indeed the case, as shown below.

\begin{lem}\label{lemma5-1}
Let $w\in \Hgo$. Then
\begin{equation}\label{5.1}
\int (2-\frac12 b -\theta)m \cdot \nabla w w\rho^{\theta-1} dm \geq 0.
\end{equation}
\end{lem}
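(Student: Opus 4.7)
The plan is to observe that for $b\geq 6$ and $-1<\theta<1$ the coefficient
$$c:=2-\frac{b}{2}-\theta \leq -1-\theta<0,$$
so it suffices to show that $\int m\cdot\nabla w\,w\,\rho^{\theta-1}\,dm\leq 0$ for every $w\in\Hgo$. Writing $m\cdot\nabla w\,w=\tfrac12 m\cdot\nabla(w^2)$, the statement reduces to proving
\begin{equation*}
\int m\cdot\nabla(w^{2})\,\rho^{\theta-1}\,dm\leq 0.
\end{equation*}

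First I would verify this for test functions $w\in C^\infty_c(B)$, where the support of $w^2$ is away from $\partial B$ and integration by parts produces no boundary term:
\begin{equation*}
\int m\cdot\nabla(w^{2})\,\rho^{\theta-1}\,dm=-\int w^{2}\,\nabla\cdot\bigl(m\,\rho^{\theta-1}\bigr)\,dm.
\end{equation*}
The key calculation is the divergence on the right. Using $\nabla\rho=-2m$ and $|m|^{2}=b-\rho$,
\begin{equation*}
\nabla\cdot\bigl(m\,\rho^{\theta-1}\bigr)=N\rho^{\theta-1}+(\theta-1)\rho^{\theta-2}m\cdot\nabla\rho=N\rho^{\theta-1}+2(1-\theta)|m|^{2}\rho^{\theta-2}.
\end{equation*}
Since $N\geq 2>0$, $1-\theta>0$, and $|m|^{2},\rho\geq 0$, this expression is pointwise nonnegative in $B$. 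Hence $\int w^{2}\nabla\cdot(m\rho^{\theta-1})\,dm\geq 0$, which gives the required sign for $w\in C^\infty_{c}$.

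It remains to extend the inequality to all $w\in\Hgo$ by density. The admissible object to control is $\int w^{2}\rho^{\theta-2}\,dm$, which is finite by the embedding (\ref{2.27}) of Lemma \ref{lem1} applied with the conjugate weight $\mu_{0}^{*}=\rho^{\theta-2}$:
\begin{equation*}
\int w^{2}\rho^{\theta-2}\,dm\leq C\int\bigl(w^{2}+|\nabla w|^{2}\bigr)\rho^{\theta}\,dm=C\|w\|_{H^{1}_{\mu_{0}}}^{2}.
\end{equation*}
Pick $w_{n}\in C^\infty_{c}$ with $w_{n}\to w$ in $H^{1}_{\mu_{0}}$; then, using this embedding together with $\rho^{\theta-1}\leq \sqrt{\rho^{\theta}\,\rho^{\theta-2}}$ and Cauchy--Schwarz, both sides of the identity $\int m\cdot\nabla(w_{n}^{2})\rho^{\theta-1}dm=-\int w_{n}^{2}\nabla\cdot(m\rho^{\theta-1})dm$ pass to the limit, and the nonpositivity of the left side is preserved. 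Multiplying by $c/2<0$ yields (\ref{5.1}). The main obstacle is precisely this density/integrability check, but the conjugate embedding of Lemma \ref{lem1} is exactly tailored to this weight $\rho^{\theta-2}$, so everything closes up.
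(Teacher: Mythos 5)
Your proposal is correct and follows essentially the same route as the paper: reduce to showing $\int m\cdot\nabla(w^2)\rho^{\theta-1}\,dm\leq 0$, compute $\nabla\cdot(m\rho^{\theta-1})=N\rho^{\theta-1}+2(1-\theta)|m|^2\rho^{\theta-2}\geq 0$, and control the passage from $C^\infty_c$ to $\Hgo$ via the conjugate embedding \eqref{2.27} with weight $\mu_0^*=\rho^{\theta-2}$. The only cosmetic difference is that you approximate first and pass to the limit, while the paper integrates by parts directly for $w\in\Hgo$ and argues that the boundary term $\int_{\partial B}w^2\rho^{\theta-1}\,dS$ vanishes because $w^2\rho^{\theta-1}\in W^{1,1}(B)$ with zero trace, using the same density of $C^\infty_c$.
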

\begin{proof}
 From  $-1<\theta<1$ and $b\geq 6$,  we see that $(2-b/2-\theta)<0$.
It suffices to show
$$\dis\int m\cdot \nabla w w\rho^{\theta-1} dm = \frac12 \int m \cdot \nabla w^2 \rho^{\theta-1} dm  \leq 0.
$$
Integration by parts gives
\begin{align*}
\int m \cdot \nabla w^2 \rho^{\theta-1} dm & = -\int w^2
(N\rho^{\theta-1}+2(1-\theta)|m|^2\rho^{\theta-2})dm +  \int_{\p B}
w^2 \rho^{\theta-1}m\cdot \frac{m}{|m|} dS \\
& \leq \sqrt{b}  \int_{\p B} w^2 \rho^{\theta-1} dS=0.
\end{align*}
Here we use the fact that $w^2 \rho^{\theta-1} \in W^{1,1}$ and $w^2
\rho^{\theta-1}|_{\p B}=0$.   To see this,  for any $w\in \Hgo$,  we
estimate
\begin{eqnarray*}
\int w^2 \rho^{\theta-1}+ |\nabla (w^2\rho^{\theta-1})| dm &\leq& \int w^2\rho^{\theta-1} + 2 |w\nabla w|\rho^{\theta-1} + 2(1-\theta)|m w^2| \rho^{\theta-2} dm\\
&\leq& C\int w^2 \sqrt{\mu_0\mu_0^*} + |w||\nabla
w|\sqrt{\mu_0\mu_0^*} + w^2\mu^*_0 dm\\
&\leq & C\|w\|^2_{H^1_{\mu_0}},
\end{eqnarray*}
due to the embedding theorem \eqref{2.27}.   Thus $w^2\rho^{\theta-1}|_{\p B} \in L^1(\p B)$ from the trace theorem and it is zero from the fact that $ C^\infty_c $ is
a dense subset of $\Hgo$. Thus \eqref{5.1}  follows.
\end{proof}
We now turn to the FPE problem including $x$-variable. The first
step in the proof of Theorem \ref{thm3} remains valid for
$\mu=\mu_0$. To check the second part of the proof, we need only
look at two extra terms beyond those in (\ref{lr}).
\begin{equation*} \dis -\left(2-\frac12 b -\theta\right)
\int m \cdot\nm \p^\gamma w \rho^{\theta-1} \p^\gamma w dm, \quad
-\left(2-\frac12 b -\theta\right) \int m \cdot\nm \p^\gamma q
\rho^{\theta-1} \p^\gamma w dm.
\end{equation*}
The first term is non-positive from Lemma \ref{lemma5-1}, and the
second term is bounded by
\begin{equation*}
C \left| \int \!\!\int m \cdot\nm \p^\gamma q \rho^{\theta-1}
\p^\gamma w dm \right| \leq C_\ep \int |\nm \p^\gamma q|^2 \mu_0 dm
+ \ep \int |\p^\gamma w|^2 \mu_0^* dm.
\end{equation*}
These ensure the same estimate  \eqref{5.2} and thus \eqref{estimate
f}.

For the well-posedness for the coupled problem, we utilize
$\theta<1$ and Lemma \ref{lemma5-1}. For example, for the proof of
Lemma \ref{lem2} with $\mu_0$
\begin{eqnarray*}
|\int \phi \nu \rho^{-1} dm|^2 = |\int \phi dm|^2 \leq \int \phi^2 \mu_0 dm \int \mu^{-1}_0 dm.
\end{eqnarray*}
Since $\theta<1$ we have  $\int \mu^{-1}_0 dm <\infty$, hence
\eqref{4.8}. Verification of other terms is omitted.

The remaining is to show Theorem \ref{thm2}, the solution $f$ is a
probability distribution if and only if $q|_{\p B}=0$ for
$\mu=\mu_0$, Positivity of $f$ follows as in Proposition
\ref{prop8}. For the conservation of mass, as in Proposition
\ref{prop9},  we only have to check \eqref{5.3}.

\begin{equation*}
\int_{B^\ep}(w\kappa m- \nabla w \cdot\nabla \phi_\ep \nu) dm -
\int_{B^\ep} w \rho^{b/2} \nabla \phi_\ep \cdot \nabla (\nu
\rho^{-b/2}) dm.
\end{equation*}
Since $\nu^2/\mu_0=\rho^{2-\theta}$ and $2-\theta >1$
\begin{equation*}
 \frac{\ep}{2} \int_{\p B_r} |\nabla \phi_\ep|^2 \rho^{2-\theta} dS
\end{equation*}
converges to $0$ as $\ep\to 0$. Thus the first term converges to $0$
as well. On the other hand, the same argument shows that the second
term converges to $\dis C\int_{\p B} q dS $ for some nonzero
constant C.  Hence, we conclude Theorem \ref{thm2} under the
assumption of Theorem \ref{thm4}.

\section{Conclusion}
In this paper, we have analyzed the FENE Dumbbell model which is of
bead-spring type Navier-Stokes-Fokker-Planck models for dilute
polymeric fluids, with our focus on developing a local
well-posedness theory subject to a class of Dirichlet-type boundary
conditions
$$
f\nu^{-1}=q \quad{\rm on} \; \p B
$$
for the polymer distribution $f$, where $\nu$ depends on $b>0$ through the distance function,
and $q$ is a given smooth function measuring the relative ratio of $f/\nu$ near boundary.
We have thus identified a sharp Dirichlet-type boundary
requirement for each $b>0$, while the sharpness of the boundary requirement is a consequence of the existence result
for each specification of the boundary behavior.
It has been shown that the probability density governed by the Fokker-Planck equation  approaches  zero near boundary,
necessarily faster than the distance function $d$ for $b>2$, faster than $d|ln d|$ for $b=2$,  and as fast as $d^{b/2}$ for $0<b<2$.
Moreover, the sharp boundary requirement for $b\geq 2$ is also sufficient for the distribution to remain a probability density.


\bigskip
\section*{Acknowledgments}  Liu's research was partially supported by the
National Science Foundation under Kinetic FRG grant DMS07-57227 and grant DMS09-07963.
The authors would like to thank Robert Pego for helpful discussions.

\end{document}